\documentclass{amsart}

\newtheorem{thm}{Theorem}[section]
\newtheorem{prp}[thm]{Proposition}
\newtheorem{lem}[thm]{Lemma}
\newtheorem{cor}[thm]{Corollary}
\newtheorem{exa}[thm]{Example}

\numberwithin{equation}{section}

\newcommand \z      {\mathbf z}
\newcommand \N      {\mathbb N}
\newcommand \Z      {\mathbb Z}
\newcommand \R      {\mathbb R}
\newcommand \F      {\mathcal F}
\newcommand \pt     {\mathbf{pt}}
\newcommand \id     {\mathrm{id}}
\newcommand \susp   {\Sigma}
\newcommand \Tor    {\operatorname{Tor}}
\newcommand \betti  {\widetilde \beta}
\newcommand \sub[1] {\langle #1 \rangle}
\newcommand \cpx[1] {\lVert #1 \rVert}

\begin{document}

\title{Homotopy type of Frobenius complexes II}

\author[TOUNAI S.]{TOUNAI Shouta}

\address{Graduate School of Mathematical Sciences, The University of Tokyo, 3-8-1 Komaba, Meguro, Tokyo, 153-8914 Japan}

\email{tounai@ms.u-tokyo.ac.jp}

\subjclass[2010]{06A07, 55P15, 13D40}

\keywords{monoid; semigroup; Frobenius complex; poset; order complex; homotopy type; monoid algebra; semigroup ring;
multigraded Poincar\'e series}

\begin{abstract}
  A finitely generated additive submonoid $\Lambda$ of $\N^d$ has the partial order defined by
  $\lambda \le \lambda + \mu$ for $\lambda, \mu \in \Lambda$.
  The Frobenius complex is the order complex of an open interval of $\Lambda$.
  In this paper, we express the homotopy type of the Frobenius complex of $\Lambda[\rho / r]$
  in terms of those of $\Lambda$,
  where $\Lambda[\rho / r]$ is the additive monoid which is added the $r$-th part of $\rho$ to $\Lambda$.
  As applications, we determine the homotopy type of the Frobenius complex of some submonoids of $\N$,
  for example, the submonoid generated by a finite geometric sequence.
  We also state an application to the multi-graded Poincar\'e series.
\end{abstract}

\maketitle

\section{Introduction}

We consider a finitely generated additive monoid $\Lambda$
which is cancellative and has no non-zero invertible element,
for example, a finitely generated additive submonoid of $\N^d$.
Then the partial order $\le_\Lambda$ on $\Lambda$ is defined by
$\lambda \le_\Lambda \lambda + \mu$ for $\lambda, \mu \in \Lambda$.
The Frobenius complex $\F(\lambda; \Lambda)$ is the order complex $\cpx{(0, \lambda)_\Lambda}$
of the open interval of $\Lambda$.

Laudal and Sletsj{\o}e~\cite{LS} proved that the graded component $\Tor_{i, \lambda}^{K[\Lambda]}(K, K)$
of the torsion group over the monoid algebra $K[\Lambda]$ is isomorphic to
the reduced homology group $\widetilde H_{i - 2}(\F(\lambda; \Lambda); K)$ of the Frobenius complex
with coefficients in $K$,
and showed, as an application, that the Poincar\'e  series of a saturated rational submonoid $\Lambda$ of $\N^2$
is given by a rational function.

The multi-graded Poincar\'e series $P_\Lambda(t, \z)$ of $\Lambda$ is defined by
\[ P_\Lambda(t, \z) = \sum_{i \in \N} \sum_{\lambda \in \Lambda} \dim_K \Tor_{i, \lambda}^{K[\Lambda]}(K, K) \cdot t^i \z^\lambda, \]
and it is an open question whether $P_\Lambda(t, \z)$ is given by a rational function or not
for an additive monoid $\Lambda$ (see \cite{PRS}).

Clark and Ehrenborg~\cite{CE} determined the homotopy type of the Frobenius complexes of some additive submonoids of $\N$.
For example, it is shown that the Frobenius complex of the submonoid of $\N$ generated by two elements
is either contractible or homotopy equivalent to a sphere,
and the multi-graded Poincar\'e series is determined and proved to be rational.
The proof is based on discrete Morse theory.

In this paper, we establish a method to calculate the homotopy type of the Frobenius complexes.
We construct the additive monoid $\Lambda[\rho / r]$ which is added the $r$-th part of $\rho$ to $\Lambda$,
and show that for $\lambda \in \Lambda$ and $k \in \N^{< r}$ the Frobenius complex of $\Lambda[\rho / r]$ satisfies
\[ \F(\lambda + k \rho / r; \Lambda[\rho / r]) \simeq
\begin{cases}
  \bigvee\limits_{\ell \rho \le_\Lambda \lambda} \susp^{2 \ell + k} \F(\lambda - \ell \rho; \Lambda)
  & \text{if $k \le 1$,} \\
  \pt & \text{if $k \ge 2$,}
\end{cases} \]
when $\rho$ is a reducible element of $\Lambda$ and $r \ge 2$.
This is the main theorem of this paper.
The proof is based on the basic technique of homotopy theory for posets and CW complexes.
Applying the main theorem, we derive a formula for the multi-graded Poincar\'e series of $\Lambda[\rho / r]$, namely
\[ P_{\Lambda[\rho / r]}(t, \z) = \frac{P_\Lambda(t, \z) \cdot (1 + t \z^{\rho / r})}{1 - t^2 \z^\rho}. \]
As applications, we show that the Frobenius complex $\F(\lambda; \Lambda)$ of $\Lambda$
is homotopy equivalent to a wedge of spheres (the dimensions may not coincide),
and determine the multi-graded Poincar\'e series of $\Lambda$, which is proved to be rational, for some submonoids $\Lambda$ of $\N$,
for example, the submonoid of $\N$ generated by the geometric sequence
$p^n, p^{n - 1} q, \dots, p q^{n - 1}, q^n$.
This gives an answer to a question raised by Clark and Ehrenborg (\cite{CE}, Question 6.4).

This paper is organized in the following way.
Section~\ref{sec : pre} presents some preliminaries.
In Section~\ref{sec : main} the main theorem is stated and proved.
Section~\ref{sec : app} is devoted to applications to the multi-graded Poincar\'e series.
In Section~\ref{sec : exa} we calculate some examples using the results of previous sections.

\section{Preliminaries} \label{sec : pre}

\subsection{Topology} \label{subsec : topology}

In this paper, $S^n$ denotes the $n$-sphere for $n \ge 0$, namely
\[ S^n = \{\, (x_0, \dots, x_n) \in \R^{n + 1} \mid {x_0}^2 + \dots + {x_n}^2 = 1 \,\}, \]
$S^{-1}$ the empty space, and $\pt$ the one-point space.
We introduce a formal symbol $S^{-2}$.
Fix a field $K$.
We define the \emph{reduced Betti number} $\betti_i(X)$ for a non-empty topological space $X$ and $i \in \Z$ by
\[ \betti_i(X) =
  \begin{cases}
    \dim_K \widetilde H_i(X; K) & \text{if $i \ge 0$,} \\
    0 & \text{if $i < 0$.} \\
  \end{cases} \]
For the empty space $S^{-1}$, we define
\[ \betti_i(S^{-1}) = \delta_{i, -1} \qquad (i \in \Z), \]
where $\delta_{i, j}$ denotes the Kronecker's delta.
We also define
\[ \betti_i(S^{-2}) = \delta_{i, -2} \qquad (i \in \Z). \]

By $\susp X$ we denote the (unreduced) suspension of a topological space $X$.
Note that $\susp S^{-1} \approx S^0$.
We also define  $\susp S^{-2} = S^{-1}$.
Then
\[ \betti_i(\susp X) = \betti_{i - 1}(X) \qquad (i \in \Z) \]
holds when $X$ is either a topological space or $S^{-2}$.

We use the two following lemmas in the proof of the main theorem.

\begin{lem} \label{lem : susp}
  Let $X_1$ and $X_2$ be subcomplexes of a CW complex $X$.
  If both $X_1$ and $X_2$ are contractible,
  then the union $X_1 \cup X_2$ is homotopy equivalent to the suspension $\susp (X_1 \cap X_2)$ of the intersection.
\end{lem}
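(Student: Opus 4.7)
The plan is to recognize $X_1 \cup X_2$ as a homotopy pushout of the cospan $X_1 \leftarrow X_1 \cap X_2 \to X_2$, and then collapse the two contractible ends to obtain the unreduced suspension of the intersection.

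First I would note that since $X_1$ and $X_2$ are subcomplexes of the CW complex $X$, the intersection $A := X_1 \cap X_2$ is again a subcomplex, and each inclusion $A \hookrightarrow X_i$ is a cofibration. Hence the ordinary pushout $X_1 \cup_A X_2$, which as a subspace of $X$ is exactly $X_1 \cup X_2$, computes the homotopy pushout. Because homotopy pushouts are invariant under replacing a corner by a homotopy equivalent space along the structure map, and $X_i \simeq \pt$ by hypothesis, I can replace each $X_i$ by a single point and the inclusion $A \hookrightarrow X_i$ by the constant map $A \to \pt$ without changing the homotopy type of the pushout. The resulting homotopy pushout of $\pt \leftarrow A \to \pt$ is, by definition, the unreduced suspension $\susp A$.

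If one prefers a more concrete construction, one can pick contractions $h_i \colon X_i \times [0,1] \to X_i$ and define a map $\susp A \to X_1 \cup X_2$ by sending the northern (resp. southern) cone on $a \in A$ into $X_1$ (resp. $X_2$) via $h_1$ (resp. $h_2$); a homotopy inverse can be produced by collapsing each $X_i$ along its contraction into the respective cone, using the fact that $A \hookrightarrow X_i$ is a cofibration to rectify these collapses into genuine homotopy equivalences. Either way, the main subtlety is purely formal: one needs the cofibration property of the inclusions $A \hookrightarrow X_i$ to promote the naive pushout into a homotopy pushout, which is exactly what the CW subcomplex hypothesis guarantees. Once that is in place the conclusion is immediate.
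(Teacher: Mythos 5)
The paper states this lemma without proof, treating it as a standard fact, so there is no in-text argument to compare yours against. Your homotopy-pushout argument is correct and is the natural proof: since inclusions of CW subcomplexes are cofibrations, $X_1\cup X_2 = X_1\cup_{X_1\cap X_2}X_2$ is a homotopy pushout of the cospan $X_1\leftarrow X_1\cap X_2\rightarrow X_2$; the gluing lemma lets you replace the contractible corners $X_1,X_2$ by points, and the resulting double mapping cylinder of $\pt\leftarrow X_1\cap X_2\rightarrow\pt$ is precisely the unreduced suspension $\susp(X_1\cap X_2)$. This also correctly handles the degenerate case $X_1\cap X_2=\emptyset$, where both sides become $S^0$, consistent with the paper's convention $\susp S^{-1}\approx S^0$. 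The concrete map you sketch in the second paragraph would need a little care to make precise (the two cones have to agree on the equator, which requires the contractions $h_1,h_2$ to restrict compatibly on $X_1\cap X_2$, or one should instead define the comparison map $X_1\cup X_2\to\susp(X_1\cap X_2)$ directly via the cofibration structure), but the first, formal argument already suffices and is complete.
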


\begin{lem} \label{lem : wedge}
  Let $X_1$ and $X_2$ be subcomplexes of a CW complex $X$.
  If $X_2$ is contractible and if the inclusion $X_1 \cap X_2 \hookrightarrow X_1$ is
  homotopic to the constant map to a point $x_1$ of $X_1$,
  then the union $X_1 \cup X_2$ is homotopy equivalent to the wedge $X_1 \vee \susp (X_1 \cap X_2)$
  of $X_1$ and the suspension of the intersection,
  where we let $X_1$ be pointed at $x_1$ and $\susp (X_1 \cap X_2)$ at one end point.
\end{lem}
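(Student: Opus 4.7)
The plan is to realize $X_1\cup X_2$ as a mapping cone and then invoke the homotopy invariance of the mapping cone under homotopy of the attaching map. First I would use that $X_2$ is a contractible subcomplex of the CW complex $X_1\cup X_2$, so the quotient map $X_1\cup X_2\to (X_1\cup X_2)/X_2$ is a homotopy equivalence. The quotient $(X_1\cup X_2)/X_2$ is canonically identified with $X_1/(X_1\cap X_2)$, because collapsing $X_2$ to a point has the same effect on $X_1\cup X_2$ as collapsing the intersection $X_1\cap X_2$ to a point inside $X_1$. Furthermore, since the inclusion $i\colon X_1\cap X_2\hookrightarrow X_1$ is a cofibration (it is the inclusion of a subcomplex), the cofiber $X_1/(X_1\cap X_2)$ is homotopy equivalent to the mapping cone $C_i=X_1\cup_i C(X_1\cap X_2)$.

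Next, by hypothesis $i$ is homotopic to the constant map $c\colon X_1\cap X_2\to\{x_1\}\subset X_1$, and a homotopy between attaching maps from a CW complex induces a homotopy equivalence of the resulting mapping cones; so $C_i\simeq C_c$. Finally, I would unpack $C_c$ directly: it is $X_1$ with the cone $C(X_1\cap X_2)$ glued along its base to the single point $x_1$. Collapsing the base of the cone to $x_1$ turns $C(X_1\cap X_2)$ into the unreduced suspension $\susp(X_1\cap X_2)$ attached to $X_1$ at precisely one endpoint, which is by definition the wedge $X_1\vee\susp(X_1\cap X_2)$ with the basepoints as stated.

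The only delicate step is the homotopy invariance of the mapping cone; one must ensure that a free (unbased) homotopy from $i$ to $c$ actually produces a homotopy equivalence of $C_i$ and $C_c$, which follows from the homotopy extension property of the CW pair $(X_1\cap X_2,\emptyset)$ but should be invoked explicitly. Everything else amounts to unpacking definitions and applying standard facts about CW pairs and cofibre sequences, so I do not anticipate any further difficulty. Compared with Lemma~\ref{lem : susp}, where contractibility of both subcomplexes forces the cofiber of $i$ to be a suspension, here the weaker hypothesis on $i$ is precisely what produces the extra wedge summand $X_1$.
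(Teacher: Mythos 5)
The paper states this lemma without proof (as it does Lemma~\ref{lem : susp}); there is no argument in the source to compare against. Your proof is correct and is the standard cofiber-sequence argument: you rightly use that $(X_1\cup X_2, X_2)$ and $(X_1, X_1\cap X_2)$ are CW pairs to pass from $X_1\cup X_2$ to $(X_1\cup X_2)/X_2 \cong X_1/(X_1\cap X_2)$ and then to the mapping cone $C_i$, replace $i$ by the null-homotopic constant map $c$, and identify $C_c$ with $X_1\vee\susp(X_1\cap X_2)$. The one subtlety you flag --- that the homotopy $i\simeq c$ is free, not based --- is exactly the right thing to worry about, and it is handled as you say by the homotopy extension property of the CW pair $\bigl(C(X_1\cap X_2), X_1\cap X_2\bigr)$ (this is precisely Hatcher's Proposition 0.18 on attaching spaces along homotopic maps). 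Your closing remark about why Lemma~\ref{lem : susp} produces only a suspension while the weaker null-homotopy hypothesis here leaves $X_1$ as a wedge summand is also accurate.
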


\subsection{Posets}

A \emph{partially ordered set} (\emph{poset}, in short) is a set $P$ together with a partial order $\le_P$ on $P$.
By $<_P$ we denote the strict order associated to $\le_P$, namely
\[ x <_P y \iff \text{$x \le_P y$ and $x \neq y$} \qquad (x, y \in P). \]
By $\cpx P$ we denotes the order complex of $P$,
that is, $\cpx P$ is the abstract simplicial complex whose vertices are elements of $P$
and whose simplices are finite chains of $P$.
We also denote the geometric realization of the order complex $\cpx P$ by the same symbol $\cpx P$.

Let $P$ and $Q$ be posets.
A \emph{poset map} from $P$ to $Q$ is a set map $f : P \to Q$ such that
$x \le_P y$ implies $f(x) \le_Q f(y)$ for any $x, y \in P$.
By $\cpx f$ we denotes the continuous map from $\cpx P$ to $\cpx Q$ induced by $f$.
For poset maps $f, g : P \to Q$ we write $f \le g$ if $f(x) \le_Q g(x)$ holds for each $x \in P$.
Note that if $f \le g$, then the induced map $\cpx f$ is homotopic to $\cpx g$.

Let $P$ be a poset and $a, b \in P$.
We write
\begin{align*}
  P_{\ge a} &= \{\, x \in P \mid a \le_P x \,\} \\
  P_{> a} &= \{\, x \in P \mid a <_P x \,\} \\
  [a, b]_P &= \{\, x \in P \mid a \le_P x \le_P b \,\}, \quad \text{and} \\
  (a, b)_P &= \{\, x \in P \mid a <_P x <_P b \,\}.
\end{align*}
Similarly, $P^{\le b}, P^{< b}, [a, b)_P$ and $(a, b]_P$ are defined.

The following lemmas are useful to calculate homotopy types of posets.

\begin{lem}
  If a poset $P$ has either a minimum element or a maximum element,
  then the order complex $\cpx P$ is contractible.
\end{lem}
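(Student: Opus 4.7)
The plan is to reduce everything to the observation, already recorded just above the statement, that $f \le g$ as poset maps implies $\cpx f \simeq \cpx g$ as continuous maps on the order complexes. This immediately gives a very short argument.

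First I would treat the case when $P$ has a minimum element $\hat 0$. I would introduce the constant poset map $c : P \to P$ defined by $c(x) = \hat 0$ for all $x \in P$; this is trivially a poset map. Since $\hat 0 \le_P x$ for every $x \in P$, we have $c \le \id_P$. By the homotopy fact quoted above, $\cpx c \simeq \cpx{\id_P} = \id_{\cpx P}$. But $\cpx c$ is literally the constant map to the vertex $\hat 0$ of $\cpx P$, so the identity of $\cpx P$ is null-homotopic, i.e.\ $\cpx P$ is contractible.

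For the case of a maximum element $\hat 1$, the argument is the order-dual: take $c(x) = \hat 1$, note $\id_P \le c$, and conclude the same way. Alternatively, one can observe that the order complex of $P$ equals that of its opposite poset $P^{\mathrm{op}}$, reducing the maximum case to the minimum case.

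I expect no real obstacle here; the only thing to be a little careful about is making sure the paper has actually set up the monotone-homotopy lemma ($f \le g \Rightarrow \cpx f \simeq \cpx g$) in the exact form I am using, which it has. Geometrically this is just the statement that $\cpx P$ is the cone on $\cpx{P \setminus \{\hat 0\}}$ with apex $\hat 0$, but phrasing the argument via the poset-map inequality $c \le \id_P$ avoids any explicit cone construction and fits the style the paper has already established.
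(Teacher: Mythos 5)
Your argument is the same as the paper's: define the constant poset map $c$ to the minimum, observe $c \le \id_P$, and invoke the quoted fact that comparable poset maps induce homotopic maps on order complexes. The paper only treats the minimum case explicitly (remarking that the maximum case is analogous), while you also spell out the dual argument; otherwise the proofs coincide.
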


\begin{proof}
  We show only the case where $P$ has a minimum element $m$.
  Let $c : P \to P$ be the constant map to $m$.
  Then $c$ is a poset map and satisfies $c \le \id_P$.
  Thus the identity map of $\cpx P$ is homotopic to the constant map $\cpx c$.
  Hence $\cpx P$ is contractible.
\end{proof}

\begin{lem} \label{lem : poset}
  Let $P$ and $Q$ be posets, $f : P \to Q$ and $g : Q \to P$ poset maps, and $a \in P$.
  Assume that $g f \le \id_P$ and $f g \le \id_Q$.
  Then we have
  \[ \cpx{P^{< a}} \simeq
    \begin{cases}
      \cpx{Q^{< f(a)}} & \text{if $g f(a) = a$,} \\
      \pt & \text{if $g f(a) <_P a$.}
    \end{cases} \]
\end{lem}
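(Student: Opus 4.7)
The plan is to treat the two cases separately but with the same underlying idea. The hypotheses $gf \le \id_P$ and $fg \le \id_Q$ amount to a Galois-connection-style relationship between $P$ and $Q$, so after restriction to the appropriate open down-sets, the maps $\cpx f$ and $\cpx g$ should become mutually homotopy inverse in the case $gf(a) = a$, and should collapse $\cpx{P^{<a}}$ to a point in the case $gf(a) <_P a$. Throughout, I will use only the principle, recalled earlier, that comparable poset maps induce homotopic continuous maps.

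In the case $gf(a) = a$, the first step is to show that $f$ restricts to a poset map $\bar f : P^{<a} \to Q^{<f(a)}$ and, symmetrically, that $g$ restricts to a poset map $\bar g : Q^{<f(a)} \to P^{<a}$. For $x <_P a$ one automatically has $f(x) \le_Q f(a)$; if equality held, then applying $g$ would give $gf(x) = gf(a) = a$, contradicting $gf(x) \le_P x <_P a$. The analogous argument for $g$ uses $fg \le \id_Q$ in place of $gf \le \id_P$. Once these restrictions are in place, the inequalities $\bar g \bar f \le \id$ and $\bar f \bar g \le \id$ are inherited, so $\cpx{\bar g \bar f} \simeq \id$ and $\cpx{\bar f \bar g} \simeq \id$, exhibiting $\cpx{\bar f}$ and $\cpx{\bar g}$ as mutual homotopy inverses.

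In the case $gf(a) <_P a$, set $m = gf(a) \in P^{<a}$. For any $x \in P^{<a}$ one has $gf(x) \le_P gf(a) = m$ since $gf$ is a poset map, and $gf(x) \le_P x <_P a$ by hypothesis, so $gf$ restricts to a poset map $P^{<a} \to P^{<a}$. Let $c_m$ denote the constant map $P^{<a} \to P^{<a}$ sending everything to $m$. Then the restricted $gf$ satisfies both $gf \le c_m$ and $gf \le \id_{P^{<a}}$ as poset maps, so $\cpx{c_m} \simeq \cpx{gf} \simeq \id_{\cpx{P^{<a}}}$, and since $\cpx{c_m}$ is constant, $\cpx{P^{<a}}$ is contractible.

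The step I expect to be the main obstacle lies in the first case: one must verify that the restrictions of $f$ and $g$ preserve strict inequality, not merely weak inequality. This is precisely where the hypothesis $gf(a) = a$ is consumed. Once this is established, both conclusions follow by a direct application of the comparability principle for poset maps.
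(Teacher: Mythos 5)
Your proof is correct. The first case is essentially the same argument as the paper's: you show that $f$ and $g$ send $P^{<a}$ and $Q^{<f(a)}$ into each other by observing that $f(x) = f(a)$ would force $gf(x) = a$, contradicting $gf(x) \le_P x <_P a$; the paper phrases this through $P^{\le a}$ and $Q^{\le f(a)}$ first and then peels off the top element, but the content is identical.

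The second case is where you genuinely depart from the paper, and your route is arguably cleaner. The paper establishes $f(P^{<a}) \subset Q^{\le f(a)}$ and $g(Q^{\le f(a)}) \subset P^{\le gf(a)} \subset P^{<a}$, so that $\cpx{P^{<a}}$ and $\cpx{Q^{\le f(a)}}$ become homotopy equivalent, and then invokes contractibility of $\cpx{Q^{\le f(a)}}$ because it has a maximum element $f(a)$. You instead never leave $P^{<a}$: setting $m = gf(a)$, the restriction of $gf$ to $P^{<a}$ lands in $P^{\le m} \subset P^{<a}$ and is simultaneously bounded above by the identity (from $gf \le \id_P$) and bounded above by the constant map $c_m$ (from monotonicity of $gf$), so on order complexes the identity is homotopic to a constant. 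This is the self-contained ``the identity is dominated by a map whose image has a maximum'' pattern; what it buys you is not having to discuss how $f$ and $g$ interact with $Q^{\le f(a)}$ at all, at the cost of not exhibiting the explicit intermediate contractible space that the paper uses. Both proofs rely on exactly the one topological input you isolate (comparable poset maps induce homotopic maps of order complexes), so the arguments are of the same depth.
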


\begin{proof}
  We first show the case $g f(a) <_P a$.
  We have
  \begin{align*}
    &f(P^{< a}) \subset f(P^{\le a}) \subset Q^{\le f(a)} \quad \text{and} \\
    &g(Q^{\le f(a)}) \subset P^{\le g f(a)} \subset P^{< a}.
  \end{align*}
  Thus $f$ and $g$ induce poset maps between $P^{< a}$ and $Q^{\le f(a)}$.
  Moreover, the induced maps between the order complexes are homotopy inverses of each other.
  Hence
  \[ \cpx{P^{< a}} \simeq \cpx{Q^{\le f(a)}} \simeq \pt. \]

  We next show the case $g f(a) = a$.
  We have
  \begin{align*}
    &f(P^{\le a}) \subset Q^{\le f(a)} \quad \text{and} \\
    &g(Q^{\le f(a)}) \subset P^{\le g f(a)} = P^{\le a}.
  \end{align*}
  Let $x \in P^{\le a}$ and $f(x) = f(a)$.
  Then we have $a = g f(a) = g f(x) \le_P x$, and so $x = a$.
  Conversely, let $y \in Q^{\le f(a)}$ and $g(y) = a$.
  Then we have $f(a) = f g(y) \le_Q y$, and so $y = f(a)$.
  Thus $f$ and $g$ induce poset maps between $P^{< a}$ and $Q^{< f(a)}$.
  Moreover, the induced maps between the order complexes are homotopy inverses of each other.
  Hence
  \[ \cpx{P^{< a}} \simeq \cpx{Q^{< f(a)}}. \qedhere \]
\end{proof}

\subsection{Frobenius complexes}

In this paper, $\N$ denotes the additive monoid of non-negative integers.
An \emph{affine monoid} is an additive monoid $\Lambda$ which satisfies three following conditions:
\begin{enumerate}
  \item $\Lambda$ is cancellative, that is, $\lambda + \nu = \mu + \nu$ implies $\lambda = \mu$
    for any $\lambda, \mu, \nu \in \Lambda$.
  \item $\Lambda$ has no non-zero invertible element, that is,
    $\lambda + \mu = 0$ implies $\lambda = \mu = 0$ for any $\lambda, \mu \in \Lambda$.
  \item $\Lambda$ is finitely generated, that is, there exist finite elements $\alpha_1, \dots, \alpha_d$ of $\Lambda$
    such that for any element $\lambda$ of $\Lambda$ can be written as
    $\lambda = m_1 \alpha_1 + \dots + m_d \alpha_d$ for some $m_1, \dots, m_d \in \N$.
\end{enumerate}
For example, a finitely generated submonoid of $\N^d$ is an affine monoid.
By $\Lambda_+$ we denote the set of all non-zero elements of $\Lambda$.

An affine monoid $\Lambda$ has the partial order $\le_\Lambda$ defined by
\[ \lambda \le_\Lambda \mu \iff \text{there exists $\nu \in \Lambda$ such that $\lambda + \nu = \mu$}
  \qquad (\lambda, \mu \in \Lambda). \]
Such an element $\nu$ is unique if exists, since $\Lambda$ is cancellative.
Thus we let $\mu - \lambda = \nu$ if $\lambda + \nu = \mu$.
By $<_\Lambda$ we denote the strict order associated to $\le_\Lambda$, namely
\begin{align*}
  \lambda <_\Lambda \mu
  &\iff \text{$\lambda \le_\Lambda \mu$ and $\lambda \neq \mu$}
  \\&\iff \text{there exists $\nu \in \Lambda_+$ such that $\lambda + \nu = \mu$}
  \qquad (\lambda, \mu \in \Lambda).
\end{align*}

For an affine monoid $\Lambda$ and $\lambda \in \Lambda$,
the \emph{Frobenius  complex} $\F(\lambda; \Lambda)$ is defined by
\[ \F(\lambda; \Lambda) =
  \begin{cases}
    \cpx{(0, \lambda)_\Lambda} & \text{if $\lambda \in \Lambda_+$,} \\
    S^{-2} & \text{if $\lambda = 0$,}
  \end{cases} \]
where $S^{-2}$ is the formal symbol introduced in Subsection~\ref{subsec : topology}.

\subsection{Multi-graded Poincar\'e series}

Let $\Lambda$ be an affine monoid, and fix a field $K$.
The \emph{multi-graded Poincar\'e series $P_\Lambda(t, \z)$} of $\Lambda$ is defined by
\[ P_\Lambda(t, \z) = P^{K[\Lambda]}_K(t, \z)
= \sum_{i \in \N} \sum_{\lambda \in \Lambda} \beta_i(\lambda; \Lambda) t^i \z^\lambda, \]
where we let
\[ \beta_i(\lambda; \Lambda) = \dim_K \operatorname{Tor}_{i, \lambda}^{K[\Lambda]}(K, K)
  \qquad (i \in \N, \ \lambda \in \Lambda). \]

Laudal and Sletsj{\o}e~\cite{LS} proved the formula
\begin{equation} \label{eqn : LS}
  \beta_i(\lambda; \Lambda) = \betti_{i - 2}(\F(\lambda; \Lambda))
  \qquad (i \in \N, \ \lambda \in \Lambda).
\end{equation}
See \cite{PRS} for more details.

\subsection{Locally finiteness of affine monoids}

\begin{prp}
  An affine monoid $\Lambda$ is locally finite,
  that is, for any $\lambda \in \Lambda$ the subset $\Lambda^{\le \lambda}$ is finite.
\end{prp}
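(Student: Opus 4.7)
The plan is to reduce the question to the combinatorics of $\N^d$ and invoke Dickson's lemma. Choose non-zero generators $\alpha_1, \dots, \alpha_d$ of $\Lambda$ (any zero generator may be discarded), and let $\phi : \N^d \to \Lambda$ be the surjective monoid homomorphism $(c_1, \dots, c_d) \mapsto \sum_i c_i \alpha_i$. Since $\Lambda^{\le \lambda} = \phi(\phi^{-1}(\Lambda^{\le \lambda}))$, it suffices to show that $\phi^{-1}(\Lambda^{\le \lambda}) \subset \N^d$ is finite.

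First I would prove that the fibre $\phi^{-1}(\lambda)$ is an antichain with respect to the componentwise order on $\N^d$. Suppose $c \le c'$ in $\phi^{-1}(\lambda)$. From $\phi(c) + \phi(c' - c) = \phi(c') = \lambda = \phi(c)$, cancellativity forces $\phi(c' - c) = 0$. Writing this out, $\sum_i (c'_i - c_i) \alpha_i = 0$; because $\Lambda$ has no non-zero invertible element, each summand $(c'_i - c_i) \alpha_i$ must vanish, and applying the no-invertibles condition once more together with $\alpha_i \neq 0$ yields $c'_i = c_i$ for every $i$. Hence $c = c'$, so $\phi^{-1}(\lambda)$ is an antichain. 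Dickson's lemma then implies that $\phi^{-1}(\lambda)$ is finite.

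Second, any $c \in \phi^{-1}(\Lambda^{\le \lambda})$ lies below some element of $\phi^{-1}(\lambda)$: if $\phi(c) + \nu = \lambda$, pick $c'' \in \N^d$ with $\phi(c'') = \nu$, and observe that $c + c'' \in \phi^{-1}(\lambda)$ with $c \le c + c''$. Therefore $\phi^{-1}(\Lambda^{\le \lambda})$ is contained in the finite union $\bigcup_{v \in \phi^{-1}(\lambda)} \{\, c \in \N^d \mid c \le v \,\}$ of finite rectangular boxes, hence is itself finite. Applying $\phi$ gives the required finiteness of $\Lambda^{\le \lambda}$.

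The main obstacle is the antichain argument: cancellativity and the absence of non-zero invertibles must be combined (the latter applied twice, once to the total sum and once to the individual summands) to convert an inequality $c \le c'$ of exponent vectors into an equality. This is exactly where the two defining conditions of an affine monoid are used; once the antichain property is in hand, the rest is a routine application of Dickson's lemma.
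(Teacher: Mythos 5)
Your proposal follows essentially the same route as the paper: pass to the surjection $\N^d \to \Lambda$ determined by the (non-zero) generators, show that the fibre over $\lambda$ is an antichain by combining cancellativity with the absence of non-zero invertibles, and deduce finiteness of that fibre and hence of $\Lambda^{\le\lambda}$ from the finiteness of antichains in $\N^d$. The only cosmetic differences are the order in which the two halves are presented and that you cite Dickson's lemma by name where the paper proves the antichain finiteness as a separate lemma by induction on $d$.
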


\begin{proof}
  Take a finite generating system $\{ \alpha_1, \dots, \alpha_d \}$ of $\Lambda$.
  We can assume that each $\alpha_i$ is non-zero.
  Let $\pi : \N^d \to \Lambda$ be the map defined by
  \[ \pi(n_1, \dots, n_d) = \sum_{i = 1}^d n_i \alpha_i
    \qquad \bigl( (n_1, \dots, n_d) \in \N^d \bigr). \]
  Then $\pi$ is surjective monoid homomorphism, and satisfies that $\pi(\vec n) = 0$ implies $\vec n = \vec 0$
  for any $\vec n \in \N^d$ since $\Lambda$ has no non-zero invertible element.

  Let $\lambda \in \Lambda$.
  We now show that
  \[ \Lambda^{\le \lambda} \subset \bigcup_{\vec \ell \in \pi^{-1}(\lambda)} \pi \bigl( (\N^d)^{\le \vec \ell} \bigr). \]
  Let $\mu \in \Lambda^{\le \lambda}$.
  Take $\vec m \in \pi^{-1}(\mu)$ and $\vec n \in \pi^{-1}(\lambda - \mu)$.
  Then we have
  \[ \pi(\vec m + \vec n) = \pi(\vec m) + \pi(\vec n) = \mu + (\lambda - \mu) = \lambda. \]
  Hence
  \[ \mu = \pi(\vec m) \in \pi \bigl( (\N^d)^{\le \vec m + \vec n} \bigr)
    \subset \bigcup_{\vec \ell \in \pi^{-1}(\lambda)} \pi \bigl( (\N^d)^{\le \vec \ell} \bigr). \]

  Since each $(\N^d)^{\le \vec \ell}$ is finite, it suffices to show that $\pi^{-1}(\lambda)$ is finite.
  We now show that $\pi^{-1}(\lambda)$ is an anti-chain of $\N^d$, that is,
  $\vec m \le_{\N^d} \vec n$ implies $\vec m = \vec n$ for any $\vec m, \vec n \in \pi^{-1}(\lambda)$.
  Let $\vec m, \vec n \in \pi^{-1}(\lambda)$ and assume that $\vec m \le_{\N^d} \vec n$.
  Then we have
  \[ \lambda = \pi(\vec n) = \pi(\vec m) + \pi(\vec n - \vec m) = \lambda + \pi(\vec n - \vec m). \]
  Since $\Lambda$ is cancellative, we obtain $\pi(\vec n - \vec m) = 0$, which implies $\vec m = \vec n$.
  By the lemma below, the anti-chain $\pi^{-1}(\lambda)$ of $\N^d$ is finite.
\end{proof}

\begin{lem}
  Any anti-chain of $\N^d$ is finite.
\end{lem}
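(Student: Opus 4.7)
The plan is to establish the stronger statement (Dickson's lemma) that every infinite sequence $(\vec a_n)_{n \in \N}$ in $\N^d$ contains an infinite non-decreasing subsequence with respect to the product order $\le_{\N^d}$. This at once implies the lemma: if $A \subset \N^d$ were an infinite anti-chain, enumerating its elements would give an infinite sequence of pairwise distinct members of $A$, and any non-decreasing pair $\vec a_{n_i} \le_{\N^d} \vec a_{n_j}$ $(i < j)$ with $\vec a_{n_i} \neq \vec a_{n_j}$ would contradict incomparability.

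I would proceed by induction on $d$. For the base case $d = 1$, I would use the classical peak argument: call an index $n$ a peak of the scalar sequence $(a_n)$ if $a_m < a_n$ for every $m > n$. Since $\N$ admits no infinite strictly decreasing sequence, only finitely many peaks can exist, and past the last peak one greedily selects a non-decreasing subsequence one term at a time, starting from any index $n_1$ after the last peak and repeatedly choosing $n_{k+1} > n_k$ with $a_{n_{k+1}} \ge a_{n_k}$, which is possible precisely because $n_k$ is not a peak.

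For the inductive step $d \ge 2$, write $\vec a_n = (a_n^{(1)}, \vec b_n) \in \N \times \N^{d-1}$. Applying the $d = 1$ case to the scalar sequence $(a_n^{(1)})$ yields an infinite set of indices along which the first coordinate is non-decreasing; applying the inductive hypothesis to $(\vec b_n)$ restricted to this index set yields a further infinite subset along which the remaining $d - 1$ coordinates are also non-decreasing. The resulting subsequence is non-decreasing in $\N^d$. No step presents a genuine obstacle; the only point requiring a little care is the base case peak argument, which is entirely standard.
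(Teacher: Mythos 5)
Your proof is correct, but it follows a genuinely different route from the paper's. The paper argues directly and finitarily: by induction on $d$, it fixes one element $\vec a = (a_1, \dots, a_d)$ of a non-empty anti-chain $A$ and decomposes $A$ as $A_1 \cup \dots \cup A_d \cup \{\vec a\}$, where $A_i$ consists of those members whose $i$-th coordinate is strictly below $a_i$; each $A_i$ is a finite union of anti-chains living in copies of $\N^{d-1}$, hence finite by the inductive hypothesis. You instead prove the strictly stronger statement that $\N^d$ is a well-quasi-order (Dickson's lemma): every infinite sequence admits an infinite non-decreasing subsequence, established by the peak argument for $d = 1$ and iterated subsequence extraction for the inductive step, and then deduce finiteness of anti-chains by contradiction. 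Both arguments are by induction on $d$, but they are not the same proof: the paper's is constructive and, in effect, bounds $|A|$ in terms of the coordinates of a single chosen element, whereas yours is an infinitary sequential argument. Your version proves more (well-quasi-orderedness, not just the absence of infinite anti-chains), which is useful if one wanted further applications such as termination arguments, at the cost of a slightly longer chain of reasoning (peak lemma plus double extraction plus contradiction) where the paper's direct decomposition suffices for the stated lemma.
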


\begin{proof}
  The proof is by induction on $d$.
  If $d = 1$, any anti-chain of $\N$ is either the empty set or a singleton
  since the order of $\N$ is total.

  Let $d \ge 2$ and $A$ an anti-chain of $\N^d$.
  Assume that $A$ is not empty.
  Take $\vec a = (a_1, \dots, a_d) \in A$.
  Let $A_i = \{\, (n_1, \dots, n_d) \in A \mid n_i < a_i \,\}$.
  We now show that
  \[ A = A_1 \cup \dots \cup A_d \cup \{ \vec a \}. \]
  Let $\vec n = (n_1, \dots, n_d) \in A \setminus (\bigcup_{i = 1}^d A_i)$.
  Then we have $a_i \le n_i$ for each $i$, which implies $\vec a \le_{\N^d} \vec n$.
  Hence $\vec n = \vec a$ since $A$ is an anti-chain.

  Let $A_{i, k} = \{\, (n_1, \dots, n_d) \in A \mid n_i = k \,\}$.
  Then $A_{i, k}$ is finite
  since $A_{i, k}$ is an anti-chain of $\{\, (n_1, \dots, n_d) \in \N^d \mid n_i = k \,\} \cong \N^{d - 1}$.
  Thus each $A_i = A_{i, 0} \cup \dots \cup A_{i, a_i - 1}$ is finite.
  Hence so is $A$.
\end{proof}

\begin{cor}
  For an affine monoid $\Lambda$ and $\lambda \in \Lambda_+$
  the Frobenius complex $\F(\lambda; \Lambda)$ is a finite simplicial complex.
\end{cor}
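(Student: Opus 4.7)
The plan is to derive this corollary as an immediate consequence of the preceding proposition on local finiteness. Since $\F(\lambda; \Lambda) = \cpx{(0, \lambda)_\Lambda}$ by definition, and an order complex has finitely many simplices as soon as its vertex set is finite (every simplex is a subset of the vertex set), it suffices to show that the open interval $(0, \lambda)_\Lambda$ is a finite set.

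The first step is the trivial inclusion $(0, \lambda)_\Lambda \subset \Lambda^{\le \lambda}$, which follows straight from the definitions of the open interval and of $\Lambda^{\le \lambda}$. The second step is to invoke the preceding proposition, which tells us that $\Lambda^{\le \lambda}$ is finite for every $\lambda \in \Lambda$. Combining these gives that $(0, \lambda)_\Lambda$ is a finite subset of $\Lambda$, hence $\cpx{(0, \lambda)_\Lambda}$ is a finite simplicial complex.

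There is no genuine obstacle here: all the substantive work was carried out in the proof of local finiteness (the surjection $\pi : \N^d \to \Lambda$ together with the lemma that anti-chains in $\N^d$ are finite). The corollary is essentially a bookkeeping step that repackages that fact in the language of Frobenius complexes.
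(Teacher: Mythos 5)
Your proof is correct and follows exactly the same route as the paper: both appeal to the inclusion $(0,\lambda)_\Lambda \subset \Lambda^{\le \lambda}$ and then invoke the preceding local-finiteness proposition. Your version simply spells out the bookkeeping (an order complex with finite vertex set has finitely many simplices) that the paper leaves implicit.
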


\begin{proof}
  The open interval $(0, \lambda)_\Lambda$ is finite since $(0, \lambda)_\Lambda \subset \Lambda^{\le \lambda}$.
  Thus the order complex $\cpx{(0, \lambda)_\Lambda}$ is a finite simplicial complex.
\end{proof}

\begin{prp} \label{prp : Archi}
  Let $\Lambda$ be an affine monoid and $\rho \in \Lambda_+$.
  Then for any $\lambda \in \Lambda$
  the set of all $\ell \in \N$ satisfying $\ell \rho \le_\Lambda \lambda$ is finite.
\end{prp}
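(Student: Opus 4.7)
The plan is to reduce the claim to local finiteness of $\Lambda$, which has just been established. Set
\[ L = \{\, \ell \in \N \mid \ell \rho \le_\Lambda \lambda \,\}. \]
By definition, the map $\varphi : L \to \Lambda^{\le \lambda}$ sending $\ell \mapsto \ell \rho$ lands in $\Lambda^{\le \lambda}$, and the previous proposition tells us $\Lambda^{\le \lambda}$ is finite. Hence it suffices to show $\varphi$ is injective.

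For injectivity I would argue that $\ell \mapsto \ell \rho$ is injective on all of $\N$ whenever $\rho \in \Lambda_+$. Suppose $\ell_1 \rho = \ell_2 \rho$ with, say, $\ell_1 \le \ell_2$. Writing $\ell_2 \rho = \ell_1 \rho + (\ell_2 - \ell_1)\rho$ and cancelling $\ell_1 \rho$ using condition (1) gives $(\ell_2 - \ell_1)\rho = 0$. If $\ell_2 - \ell_1 \ge 1$, then this reads $\rho + (\ell_2 - \ell_1 - 1)\rho = 0$, and condition (2) forces $\rho = 0$, contradicting $\rho \in \Lambda_+$. Therefore $\ell_1 = \ell_2$.

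Combining the two steps, $L$ injects into the finite set $\Lambda^{\le \lambda}$, so $L$ is finite. There is no real obstacle; the only subtle point is making sure the two defining conditions of an affine monoid (cancellativity and the absence of non-zero invertible elements) are both invoked to establish injectivity of $\ell \mapsto \ell\rho$, since each separately is insufficient.
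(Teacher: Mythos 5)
Your proof is correct and takes essentially the same approach as the paper: map $\ell \mapsto \ell\rho$ into the finite set $\Lambda^{\le\lambda}$ and argue injectivity. The only difference is that you spell out the injectivity argument (invoking both cancellativity and the absence of non-zero invertible elements), where the paper simply asserts it; your added detail is accurate and in fact clarifies the paper's terse remark.
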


\begin{proof}
  Let $A$ be the set of all $\ell \in \N$ satisfying $\ell \rho \le_\Lambda \lambda$,
  and $f : \N \to \Lambda$ the map defined by $f(\ell) = \ell \rho$.
  Since $\Lambda$ is an affine monoid and $\rho$ is non-zero,
  the map $f$ is injective.
  Moreover, the image of $A$ by $f$ is contained in the finite subset $\Lambda^{\le \lambda}$.
  Hence $A$ is finite.
\end{proof}

\section{The main theorem} \label{sec : main}

Let $\Lambda$ be an affine monoid,
$\rho \in \Lambda$,
and $r \in \N$.
Assume that $\rho$ is reducible, that is, there exist $\sigma, \tau \in \Lambda_+$ such that $\rho = \sigma + \tau$,
and that $r \ge 2$.

Let us construct the additive monoid $\Lambda[\rho / r]$ which is added the $r$-th part of $\rho$ to $\Lambda$.
Consider the direct sum $\Lambda \oplus \N \alpha$ of $\Lambda$
and the free additive monoid $\N \alpha$ generated by a formal element $\alpha$,
and the equivalence relation $\sim$ on $\Lambda \oplus \N \alpha$ generated by
\[ (\lambda + \rho) + k \alpha \sim \lambda + (r + k) \alpha \qquad (\lambda \in \Lambda, \ k \in \N). \]
Define $\Lambda[\rho / r]$ to be the quotient $(\Lambda \oplus \N \alpha) / {\sim}$.
Since the equivalence relation $\sim$ is closed by addition of $\Lambda \oplus \N \alpha$, that is,
$x \sim x'$ and $y \sim y'$ implies $x + y \sim x' + y'$ for any $x, x', y, y' \in \Lambda \oplus \N \alpha$,
the quotient $\Lambda[\rho / r]$ has the canonical additive monoid structure.
We denote the equivalence class of $\lambda + k \alpha$ modulo $\sim$ simply by $\lambda + k \rho / r$.

Let $\pi : \Lambda \oplus \N \alpha \to \Lambda \times \N^{< r}$ be the map defined by
\[ \pi(\lambda + (\ell r + k) \alpha) = (\lambda + \ell \rho, k)
  \qquad (\lambda \in \Lambda, \ \ell \in \N, \ k \in \N^{< r}). \]
Then $x \sim y$ is equivalent to $\pi(x) = \pi(y)$ for any $x, y \in \Lambda \oplus \N \alpha$.
We can check
that there is a bijection $\Lambda \times \N^{< r} \cong \Lambda[\rho / r]$
which sends $(\lambda, k)$ to $\lambda + k \rho / r$, and
that $\Lambda[\rho / r]$ is an affine monoid.
Note that for $\lambda, \lambda' \in \Lambda$ and $k, k' \in \N^{< r}$ we have
\[ \lambda + k \rho / r \le_{\Lambda[\rho / r]} \lambda' + k' \rho / r \iff
\begin{cases}
  \lambda \le_\Lambda \lambda' & \text{if $k \le k'$,} \\
  \lambda + \rho \le_\Lambda \lambda' & \text{if $k > k'$.}
\end{cases} \]

Define the function $\ell_\rho : \Lambda \to \N$ by
\[ \ell_\rho(\lambda) = \max \{\, \ell \in \N \mid \ell \rho \le_\Lambda \lambda \,\}
  \qquad (\lambda \in \Lambda). \]
By Proposition~\ref{prp : Archi}, the function $\ell_\rho$ is well-defined since $\rho \in \Lambda_+$.
Note that $\ell_\rho$ satisfies that
\[ \ell \rho \le_\Lambda \lambda \iff \ell \le \ell_\rho(\lambda)
  \qquad (\ell \in \N, \ \lambda \in \Lambda). \]
Consequently,
\[ \ell_\rho(\lambda + \rho) = \ell_\rho(\lambda) + 1
  \qquad (\lambda \in \Lambda) \]
holds since $\ell \rho \le_\Lambda \lambda$ is equivalent to $(\ell + 1) \rho \le_\Lambda \lambda + \rho$.

The following is the main theorem of this paper.

\begin{thm} \label{thm : main}
  For $\lambda \in \Lambda$ and $k \in \N^{< r}$
  the Frobenius complex of $\Lambda[\rho / r]$ satisfies
  \[ \F(\lambda + k \rho / r; \Lambda[\rho / r]) \simeq
  \begin{cases}
    \bigvee\limits_{\ell = 0}^{\ell_\rho(\lambda)} \susp^{2 \ell + k} \F(\lambda - \ell \rho; \Lambda)
    & \text{if $k \le 1$,} \\
    \pt & \text{if $k \ge 2$.}
  \end{cases} \]
\end{thm}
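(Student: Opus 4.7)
The argument naturally splits by $k$: for $k \ge 2$ the target is contractibility, for $k = 1$ I would reduce to $k = 0$ by a single suspension, and for $k = 0$ I would induct on $L = \ell_\rho(\lambda)$.

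For $k \ge 2$, I would exhibit a poset retraction $g\colon P_k \to P_k^{\ge \rho/r}$ of $P_k := (0, \lambda + k\rho/r)_{\Lambda[\rho/r]}$ onto its upset above $\rho/r$, via $g(\mu + j\rho/r) = \mu + \max(j, 1)\rho/r$. The target has minimum $\rho/r$ and so is contractible; $g$ is a poset map with $g \ge \id_{P_k}$; and the condition $k \ge 2 > 1$ is precisely what ensures $g$ lands in $P_k$. For $k = 1$ (any $L$), I would apply Lemma \ref{lem : susp} to the decomposition $X_1 = \cpx{P_1^{\le \lambda}}$ (contractible via maximum $\lambda$) and $X_2 = \cpx{P_1 \setminus \{\lambda\}}$. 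The latter is contractible via the analogous retraction onto its $j \ge 1$ slice; coverage follows because $\lambda$ is the maximum of $P_1$, so any chain containing $\lambda$ lies entirely in $P_1^{\le \lambda}$. The intersection $\cpx{P_1^{<\lambda}}$ identifies literally with $\cpx{P_0} = \F(\lambda; \Lambda[\rho/r])$, so Lemma \ref{lem : susp} gives $\cpx{P_1} \simeq \susp \F(\lambda; \Lambda[\rho/r])$; combined with the $k = 0$ formula for the same $\lambda$, this yields the $k = 1$ formula.

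For $k = 0$, I would induct on $L$. The base $L = 0$ is immediate, as $\rho \not\le_\Lambda \lambda$ eliminates all $j \ge 1$ elements from $P_0$, giving $\F(\lambda; \Lambda[\rho/r]) = \F(\lambda; \Lambda)$. In the inductive step $L \ge 1$, the aim is a decomposition $\cpx{P_0} = X_1 \cup X_2$ to which Lemma \ref{lem : wedge} applies, yielding $\cpx{P_0} \simeq X_1 \vee \susp(X_1 \cap X_2) \simeq \F(\lambda; \Lambda) \vee \susp^2 \F(\lambda - \rho; \Lambda[\rho/r])$; the induction hypothesis at $\lambda - \rho$ (where $\ell_\rho(\lambda - \rho) = L - 1$) then telescopes this into $\bigvee_{\ell=0}^L \susp^{2\ell} \F(\lambda - \ell\rho; \Lambda)$. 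The natural candidate takes $X_1 = \cpx{(0, \lambda)_\Lambda} = \F(\lambda; \Lambda)$ (the $j = 0$ layer) and $X_2 = \bigcup_{m \in M} \operatorname{St}(m)$ over $M = \{\, x \in P_0 : j(x) \ge 1 \,\}$; coverage is automatic because any chain in $P_0$ containing a $j \ge 1$ element is totally ordered with it.

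The hard part will be (a) establishing the contractibility of $X_2$ and (b) the homotopy identification $X_1 \cap X_2 \simeq \susp \F(\lambda - \rho; \Lambda[\rho/r])$. For (a), my plan is to retract $X_2$ onto $\cpx{M}$ (which has minimum $\rho/r$, hence is contractible) by routing a $j = 0$ vertex $\mu$ to $\mu + \rho/r$ when $\mu \le_\Lambda \lambda - \rho$ and to $(\mu - \rho) + (r-1)\rho/r$ when $\mu \ge_\Lambda \rho$; reconciling the two choices on the overlap $\mu \in [\rho, \lambda - \rho]_\Lambda$ is the delicate point, where the reducibility $\rho = \sigma + \tau$ should enter essentially. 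For (b), the intersection $X_1 \cap X_2$ decomposes as the union $\bigcup_{\mu_m \in [0, \lambda - \rho]_\Lambda} \cpx{(0, \mu_m]_\Lambda} * \cpx{[\mu_m + \rho, \lambda)_\Lambda}$ of joins of contractible pieces indexed by the ``gap location'' $\mu_m$, and its homotopy type can be attacked via the nerve of this cover; the outer $\susp$ should arise from the degree of freedom in the gap, while the $\F(\lambda - \rho; \Lambda[\rho/r])$ factor reflects the combinatorial structure of valid gap endpoints, again drawing on the already-settled contractible case $k \ge 2$.
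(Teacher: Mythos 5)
Your handling of $k \ge 2$ (the up-shift $\mu + j\rho/r \mapsto \mu + \max(j,1)\rho/r$ into the contractible up-set of $\rho/r$), of the reduction $k=1 \Rightarrow k=0$ via Lemma~\ref{lem : susp} applied to $X_1 = \cpx{P_1^{\le \lambda}}$ and $X_2 = \cpx{P_1 \setminus \{\lambda\}}$, and of the base case $\ell_\rho(\lambda) = 0$ is correct, and runs parallel to the paper (which first reduces to $r=2$ and proves $\F(\lambda + \rho/2;\Lambda[\rho/2]) \simeq \susp\F(\lambda;\Lambda[\rho/2])$ with a slightly different cover); you should only add a word about $\lambda = 0$, where $\lambda \notin P_1$ and the $S^{-2}$, $S^{-1}$ conventions take over. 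The inductive step for $k=0$, however, is where the theorem actually lives, and there your proposal has a genuine gap that is not just a matter of detail: you never verify, or even mention, the second hypothesis of Lemma~\ref{lem : wedge}, namely that the inclusion $X_1 \cap X_2 \hookrightarrow X_1$ is homotopic to a constant map. Contractibility of $X_2$ alone gives only the suspension statement of Lemma~\ref{lem : susp}, not the wedge splitting. This null-homotopy is exactly the point at which the reducibility of $\rho$ must be used: in the paper one connects the inclusion of the intersection into $X_1$ to a constant map through a zigzag of poset maps $g_1 \ge g_2 \le g_3 \ge g_4$, where $g_3$ sends a $j=1$ vertex $\mu + \rho/2$ to $\mu + \sigma$ for a decomposition $\rho = \sigma + \tau$ with $\sigma, \tau \in \Lambda_+$, and $g_4$ is the constant map to $\sigma$. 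You instead locate the role of reducibility inside the contractibility of $X_2$, which is a misplacement.

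The two items you yourself flag as ``the hard part'' are also left at the level of intentions, and the plans as stated do not go through. For (a), your $X_2$ is a union of closed stars, not the order complex of a subposet, so the order-homotopy toolkit ($f \le g$ implies $\cpx f \simeq \cpx g$) is not available: a monotone vertex map need not send a simplex of $X_2$ to a simplex of $X_2$ (route $\mu_1 \mapsto \mu_1 + \rho/r$ and $\mu_2 \mapsto (\mu_2 - \rho) + (r-1)\rho/r$ on a chain $\mu_1 <_\Lambda \mu_2$ and the images need not be comparable), and the map is in any case not well defined on the overlap $[\rho, \lambda-\rho]_\Lambda$. For (b), ``the outer suspension should arise from the degree of freedom in the gap'' is a hope, not an argument. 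All three difficulties evaporate if you choose the cover by subposets as the paper does: take $X_2 = \cpx{[\rho/2,\lambda)_{\Lambda[\rho/2]}}$, which is contractible because it has the minimum $\rho/2$, and $X_1 = \cpx{U_1^{<\lambda}}$ with $U_1 = \Lambda[\rho/2] \setminus \{0, \rho/2\}$, which is homotopy equivalent to $\F(\lambda;\Lambda)$ by Lemma~\ref{lem : poset}; then $X_1 \cap X_2 = \cpx{(\rho/2,\lambda)_{\Lambda[\rho/2]}}$ is literally a translate of $(0,\lambda-\rho/2)_{\Lambda[\rho/2]}$, so its suspension identification follows from the already-established $k=1$ statement applied to $\lambda - \rho$, and the null-homotopy is the explicit zigzag above. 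I recommend redoing the inductive step with subposet covers throughout and stating the verification of both hypotheses of Lemma~\ref{lem : wedge}.
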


We will prove Theorem~\ref{thm : main} in several steps.

\begin{prp}
  For $\lambda \in \Lambda$ and $k \in \N^{< r}$
  the Frobenius complex of $\Lambda[\rho / r]$ satisfies
  \[ \F(\lambda + k \rho / r; \Lambda[\rho / r]) \simeq
  \begin{cases}
    \F(\lambda + k \rho / 2; \Lambda[\rho / 2]) & \text{if $k \le 1$,} \\
    \pt & \text{if $k \ge 2$.}
  \end{cases} \]
\end{prp}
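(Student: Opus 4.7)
The plan is to invoke Lemma~\ref{lem : poset} with $P = \Lambda[\rho / r]_+$, $Q = \Lambda[\rho / 2]_+$, and the maps
\[ f(\mu + j \rho / r) = \mu + \min(j, 1) \rho / 2, \qquad g(\nu + k \rho / 2) = \nu + k \rho / r. \]
Under the bijections $\Lambda[\rho / r] \cong \Lambda \times \N^{< r}$ and $\Lambda[\rho / 2] \cong \Lambda \times \N^{< 2}$, a case analysis on the order description
\[ \mu + j \rho / r \le_{\Lambda[\rho / r]} \mu' + j' \rho / r \iff
\begin{cases}
  \mu \le_\Lambda \mu' & \text{if $j \le j'$,} \\
  \mu + \rho \le_\Lambda \mu' & \text{if $j > j'$,}
\end{cases} \]
recalled in the excerpt (and the analogous one for $\Lambda[\rho/2]$) confirms that $f$ and $g$ are poset maps; both preserve positivity, since the fibre of $0$ under either map is $\{0\}$. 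A direct computation yields $g f(\mu + j \rho / r) = \mu + \min(j, 1) \rho / r \le_{\Lambda[\rho / r]} \mu + j \rho / r$ together with $f g = \id_Q$, so the hypotheses $g f \le \id_P$ and $f g \le \id_Q$ of Lemma~\ref{lem : poset} are in force.

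Next, I would set $a = \lambda + k \rho / r$, handling the degenerate case $\lambda = 0, k = 0$ separately (both sides become $S^{-2}$ by definition of the Frobenius complex). Then $f(a) = \lambda + \min(k, 1) \rho / 2$ and $g f(a) = \lambda + \min(k, 1) \rho / r$. When $k \le 1$, $\min(k, 1) = k$ gives $g f(a) = a$, and Lemma~\ref{lem : poset} delivers
\[ \F(\lambda + k \rho / r; \Lambda[\rho / r]) \simeq \cpx{Q^{< f(a)}} = \F(\lambda + k \rho / 2; \Lambda[\rho / 2]). \]
When $k \ge 2$, $g f(a) = \lambda + \rho / r$ while $a$ has second coordinate $k > 1$; since $1 \le k$ with $\lambda \le_\Lambda \lambda$ and the two elements are distinct, the order description gives $g f(a) <_{\Lambda[\rho / r]} a$, and Lemma~\ref{lem : poset} forces $\F(\lambda + k \rho / r; \Lambda[\rho / r]) \simeq \pt$.

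The main obstacle will be the routine but cramped sub-case analysis ensuring $f$ is a poset map. The most delicate situation is $j > j' \ge 1$, in which $\min(j, 1) = \min(j', 1) = 1$, so the inequality required in $\Lambda[\rho / 2]$ drops to $\mu \le_\Lambda \mu'$, which is strictly weaker than the available hypothesis $\mu + \rho \le_\Lambda \mu'$; the remaining sub-cases are analogous. Once this bookkeeping is executed, the full proposition follows from the two cases of Lemma~\ref{lem : poset} as above.
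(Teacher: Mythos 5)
Your proposal is correct and follows the same route as the paper: the same pair of maps $f$ and $g$, the same verification of $gf \le \id$ and $fg = \id$, and the same invocation of Lemma~\ref{lem : poset} with the case split on $k \le 1$ versus $k \ge 2$. The only (minor but valid) point you add is the explicit separate treatment of the degenerate case $\lambda = 0$, $k = 0$, where $a = 0 \notin \Lambda[\rho/r]_+$ so the lemma does not literally apply and one falls back on the convention $\F(0; \cdot) = S^{-2}$; the paper leaves this implicit.
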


\begin{proof}
  Let $f : \Lambda[\rho / r] \to \Lambda[\rho /2]$ and $g : \Lambda[\rho / 2] \to \Lambda[\rho / r]$ be the maps
  defined by
  \begin{align*}
    f(\lambda + k \rho / r) &= \lambda + \min \{ k, 1 \} \rho / 2
    && (\lambda \in \Lambda, \ k \in \N^{< r}) \quad \text{and} \\
    g(\lambda + k \rho / 2) &= \lambda + k \rho / r
    && (\lambda \in \Lambda, \ k \in \N^{< 2}).
  \end{align*}
  Then $f$ and $g$ are poset maps, and satisfy $g f \le \id_{\Lambda[\rho / r]}$ and $f g = \id_{\Lambda[\rho / 2]}$.
  Moreover, $f$ and $g$ induce poset maps between $\Lambda[\rho / r]_+$ and $\Lambda[\rho / 2]_+$.
  Let $\lambda \in \Lambda$ and $k \in \N^{< r}$.
  Note that
  \[ g f(\lambda + k \rho / r) = \lambda + k \rho / r \iff k \le 1. \]
  We now apply Lemma~\ref{lem : poset} to $f : \Lambda[\rho / r]_+ \to \Lambda[\rho / 2]_+$
  and $g : \Lambda[\rho / 2]_+ \to \Lambda[\rho / r]_+$.
  If $k \le 1$, then we have
  \begin{align*}
    \F(\lambda + k \rho / r; \Lambda[\rho / r])
    &= \cpx{\Lambda[\rho / r]_+^{< \lambda + k \rho / r}}
    \\&\simeq \cpx{\Lambda[\rho / 2]_+^{< f(\lambda + k \rho / r)}}
    \\&= \F(\lambda + k \rho / 2; \Lambda[\rho / 2]).
  \end{align*}
  If $k \ge 2$, then we have
  \[ \F(\lambda + k \rho / r; \Lambda[\rho / r]) = \cpx{\Lambda[\rho / r]_+^{< \lambda + k \rho / r}}
  \simeq \pt. \qedhere \]
\end{proof}

By the previous proposition, we need only consider the case $r = 2$.
We define some subsets and maps to observe $\Lambda[\rho / 2]$.
Let
\begin{align*}
  U_1 &= \Lambda[\rho / 2] \setminus \{ 0, \rho / 2 \}, \\
  U_2 &= \Lambda[\rho / 2]_{\ge \rho / 2}, \quad \text{and} \\
  U_{12} &= U_1 \cap U_2.
\end{align*}
Then $U_1, U_2$ and $U_{12}$ are upper subsets of $\Lambda[\rho / 2]$ and
satisfy $U_1 \cup U_2 = \Lambda[\rho / 2]_+$ and $U_{12} = \Lambda[\rho / 2]_{> \rho / 2}$.
Let $h : \Lambda[\rho / 2] \to \Lambda$ and $i : \Lambda \to \Lambda[\rho / 2]$ be the maps defined by
\begin{align*}
  h(\lambda + k \rho / 2) &= \lambda &&(\lambda \in \Lambda, \ k \in \N^{< 2}) \quad \text{and} \\
  i(\lambda) &= \lambda &&(\lambda \in \Lambda).
\end{align*}
Then $h$ and $i$ are poset maps, and satisfy $i h \le \id_{\Lambda[\rho / 2]}$ and $h i = \id_\Lambda$.
Moreover, $h$ and $i$ induces poset maps between $U_1$ and $\Lambda_+$.

\begin{prp} \label{prp : susp}
  For $\lambda \in \Lambda$
  the Frobenius complex of $\Lambda[\rho / 2]$ satisfies
  \[ \F(\lambda + \rho / 2; \Lambda[\rho / 2]) \simeq \susp \F(\lambda; \Lambda[\rho / 2]). \]
\end{prp}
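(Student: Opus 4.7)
The plan is to apply Lemma~\ref{lem : susp} with $X_1 = \cpx{U_1^{<x}}$ and $X_2 = \cpx{U_2^{<x}}$, where $x := \lambda + \rho/2$, identifying $\F(\lambda + \rho/2; \Lambda[\rho/2]) = \cpx P$ with $P = (0, x)_{\Lambda[\rho/2]}$. The degenerate case $\lambda = 0$ will be handled directly: the open interval $(0, \rho/2)_{\Lambda[\rho/2]}$ is empty, so $\F(\rho/2; \Lambda[\rho/2]) = S^{-1} = \susp S^{-2} = \susp \F(0; \Lambda[\rho/2])$ by the conventions of Subsection~\ref{subsec : topology}.

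Assume $\lambda \in \Lambda_+$ from now on. The crucial combinatorial observation is that $\rho/2$ is a minimal element of $\Lambda[\rho/2]_+$: from the explicit description of $\le_{\Lambda[\rho/2]}$, any relation $\mu + k\rho/2 <_{\Lambda[\rho/2]} \rho/2$ forces $\mu = 0$ by a short case split on $k \in \{0, 1\}$. From this I verify that $\cpx{U_1^{<x}} \cup \cpx{U_2^{<x}} = \cpx P$: a chain in $P$ either avoids $\rho/2$ and hence lies in $U_1^{<x} = P \setminus \{\rho/2\}$, or else contains $\rho/2$, in which case every other vertex, being comparable to $\rho/2$ but not strictly below it, lies in $\Lambda[\rho/2]_{> \rho/2} \subset U_2$, so the chain lies in $U_2^{<x} = P_{\ge \rho/2}$.

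Both pieces are contractible. The subposet $U_2^{<x}$ has $\rho/2$ as minimum (using $\lambda \neq 0$ to ensure $\rho/2 <_{\Lambda[\rho/2]} x$), so its order complex is contractible by the earlier lemma on posets with a minimum element. For $\cpx{U_1^{<x}}$, I invoke Lemma~\ref{lem : poset} for the induced poset maps $h : U_1 \to \Lambda_+$ and $i : \Lambda_+ \to U_1$ at the element $a = x$: since $ih(x) = \lambda <_{\Lambda[\rho/2]} x$, the lemma yields $\cpx{U_1^{<x}} \simeq \pt$. Applying Lemma~\ref{lem : susp} then gives $\cpx P \simeq \susp \cpx{U_1^{<x} \cap U_2^{<x}} = \susp \cpx{(\rho/2, x)_{\Lambda[\rho/2]}}$.

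To conclude, the translation $y \mapsto y - \rho/2$, well-defined by cancellativity of $\Lambda[\rho/2]$, is a poset isomorphism $(\rho/2, x)_{\Lambda[\rho/2]} \cong (0, \lambda)_{\Lambda[\rho/2]}$, hence $\cpx{(\rho/2, x)_{\Lambda[\rho/2]}} \cong \F(\lambda; \Lambda[\rho/2])$. The main obstacle is verifying the covering condition in the second paragraph; it rests entirely on the minimality of $\rho/2$ in $\Lambda[\rho/2]_+$, which is the structural feature of $\Lambda[\rho/r]$ that makes the inductive identification possible.
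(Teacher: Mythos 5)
Your proposal matches the paper's proof step for step: both split $\F(\lambda+\rho/2;\Lambda[\rho/2])$ into $\cpx{U_1^{<x}}\cup\cpx{U_2^{<x}}$, show each piece is contractible (using Lemma~\ref{lem : poset} with $h,i$ for $U_1$, and the minimum element $\rho/2$ for $U_2$), identify the intersection with $\F(\lambda;\Lambda[\rho/2])$ via translation, and invoke Lemma~\ref{lem : susp}. The only difference is that you explicitly justify, via minimality of $\rho/2$ in $\Lambda[\rho/2]_+$, that the two subcomplexes actually cover the order complex, a point the paper leaves implicit; that is a correct and worthwhile clarification, not a change of route.
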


\begin{proof}
  If $\lambda = 0$, then we have
  \[ \F(\rho / 2; \Lambda[\rho / 2]) = \cpx{(0, \rho / 2)_{\Lambda[\rho / 2]}}
  = S^{-1} = \susp S^{-2} = \susp \F(0; \Lambda[\rho / 2]). \]
  Assume that $\lambda \in \Lambda_+$.
  Then we have
  \[ \F(\lambda + \rho / 2; \Lambda[\rho / 2])
  = \cpx{\Lambda[\rho / 2]_+^{< \lambda + \rho / 2}}
  = \cpx{U_1^{< \lambda + \rho / 2}} \cup \cpx{U_2^{< \lambda + \rho / 2}}. \]
  Applying Lemma~\ref{lem : poset} to $h : U_1 \to \Lambda_+$ and $i : \Lambda_+ \to U_1$, we obtain
  \[ \cpx{U_1^{< \lambda + \rho / 2}} \simeq \pt \]
  since $i h(\lambda + \rho / 2) = \lambda <_{\Lambda[\rho / 2]} \lambda + \rho / 2$.
  On the other hand, we have
  \[ \cpx{U_2^{< \lambda + \rho / 2}} = \cpx{[\rho / 2, \lambda + \rho / 2)_{\Lambda[\rho / 2]}} \simeq \pt. \]
  We also have
  \begin{align*}
    \cpx{U_1^{< \lambda + \rho / 2}} \cap \cpx{U_2^{< \lambda + \rho / 2}}
    &= \cpx{U_{12}^{< \lambda + \rho / 2}}
    \\&= \cpx{(\rho / 2, \lambda + \rho / 2)_{\Lambda[\rho / 2]}}
    \\&\approx \cpx{(0, \lambda)_{\Lambda[\rho / 2]}}
    \\&= \F(\lambda; \Lambda[\rho / 2]),
  \end{align*}
  where the homeomorphism $\cpx{(0, \lambda)_{\Lambda[\rho / 2]}} \approx \cpx{(\rho / 2, \lambda + \rho / 2)_{\Lambda[\rho / 2]}}$
  is given by the poset isomorphism
  \[ (0, \lambda)_{\Lambda[\rho / 2]} \cong (\rho / 2, \lambda + \rho / 2)_{\Lambda[\rho / 2]} \]
  which sends $\gamma$ to $\gamma + \rho / 2$.
  Applying Lemma~\ref{lem : susp}, we obtain
  \begin{align*}
    \F(\lambda + \rho / 2; \Lambda[\rho / 2])
    &= \cpx{U_1^{< \lambda + \rho / 2}} \cup \cpx{U_2^{< \lambda + \rho / 2}}
    \\&\simeq \susp (\cpx{U_1^{< \lambda + \rho / 2}} \cap \cpx{U_2^{< \lambda + \rho / 2}})
    \\&\approx \susp \F(\lambda; \Lambda[\rho / 2]). \qedhere
  \end{align*}
\end{proof}

The following proposition completes the proof of Theorem~\ref{thm : main}.

\begin{prp} \label{prp : wedge}
  For $\lambda \in \Lambda$
  the Frobenius complex of $\Lambda[\rho / 2]$ satisfies
  \begin{equation}
    \label{eqn : main}
    \F(\lambda; \Lambda[\rho / 2]) \simeq \bigvee_{\ell = 0}^{\ell_\rho(\lambda)} \susp^{2 \ell} \F(\lambda - \ell \rho; \Lambda).
  \end{equation}
\end{prp}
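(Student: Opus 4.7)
The plan is to prove Proposition~\ref{prp : wedge} by induction on $\ell_\rho(\lambda)$, which is finite by Proposition~\ref{prp : Archi}. The inductive skeleton uses the decomposition $\Lambda[\rho/2]_+^{<\lambda} = U_1^{<\lambda} \cup U_2^{<\lambda}$ and the maps $h, i$ already introduced.

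In the base case $\ell_\rho(\lambda) = 0$ I have $\rho \not\le_\Lambda \lambda$, and the order criterion for $\Lambda[\rho/2]$ then forces $\rho/2 \not\le_{\Lambda[\rho/2]} \lambda$ as well. Hence $U_2^{<\lambda}$ is empty and $\F(\lambda; \Lambda[\rho/2]) = \cpx{U_1^{<\lambda}}$; an application of Lemma~\ref{lem : poset} to $h, i$ at $\lambda$—where $ih(\lambda) = \lambda$ since $\lambda \in \Lambda$—identifies this with $\F(\lambda; \Lambda)$, matching the single summand of~\eqref{eqn : main}.

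In the inductive step $\ell_\rho(\lambda) \ge 1$, so that $\rho \le_\Lambda \lambda$, I would apply Lemma~\ref{lem : wedge} with $X_1 = \cpx{U_1^{<\lambda}}$ and $X_2 = \cpx{U_2^{<\lambda}}$. The three ingredients are: $X_2$ is contractible with minimum $\rho/2$; $X_1 \simeq \F(\lambda; \Lambda)$ exactly as in the base case; and the intersection $\cpx{U_{12}^{<\lambda}} = \cpx{(\rho/2, \lambda)_{\Lambda[\rho/2]}}$ is homeomorphic via the shift $\gamma \mapsto \gamma - \rho/2$ to $\F((\lambda - \rho) + \rho/2; \Lambda[\rho/2])$, hence by Proposition~\ref{prp : susp} homotopy equivalent to $\susp \F(\lambda - \rho; \Lambda[\rho/2])$. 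Once the null-homotopy hypothesis of Lemma~\ref{lem : wedge} is verified, the lemma yields
\[
  \F(\lambda; \Lambda[\rho/2]) \simeq \F(\lambda; \Lambda) \vee \susp^2 \F(\lambda - \rho; \Lambda[\rho/2]),
\]
and substituting the inductive hypothesis for $\F(\lambda - \rho; \Lambda[\rho/2])$—available because $\ell_\rho(\lambda - \rho) = \ell_\rho(\lambda) - 1$—together with the reindexing $\ell \mapsto \ell + 1$ of the suspended wedge delivers the claimed decomposition. The degenerate case $\lambda = \rho$, in which $U_{12}^{<\lambda}$ is empty, is handled directly and agrees with the formula.

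The technical core, which I expect to be the main obstacle, is verifying that the inclusion $\cpx{U_{12}^{<\lambda}} \hookrightarrow \cpx{U_1^{<\lambda}}$ is nullhomotopic. My tactic is to transport the cover used in the proof of Proposition~\ref{prp : susp}: the shift $+\rho/2$ carries the two contractible subcomplexes $\cpx{U_1^{<\lambda - \rho/2}}$ and $\cpx{U_2^{<\lambda - \rho/2}}$ of $\F(\lambda - \rho/2; \Lambda[\rho/2])$ to contractible sub-posets $V_+$ and $V_-$ that together cover $U_{12}^{<\lambda}$, with $V_-$ possessing minimum $\rho$. The inclusion $\cpx{V_-} \hookrightarrow \cpx{U_1^{<\lambda}}$ factors through the contractible upper set $\cpx{U_1^{<\lambda}_{\ge \rho}}$ via the poset-map inequality $\id \ge c_\rho$, so is null; the inclusion $\cpx{V_+} \hookrightarrow \cpx{U_1^{<\lambda}}$ is automatically null as its domain is contractible. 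The delicate remaining step is to select the two nullhomotopies compatibly on $\cpx{V_+ \cap V_-}$ so that they glue to a single nullhomotopy of the inclusion of $\cpx{V_+} \cup \cpx{V_-} = \cpx{U_{12}^{<\lambda}}$; I expect this compatibility to be engineered by routing both contractions through the common vertex $\rho$ using explicit chains of poset-map inequalities provided by Lemma~\ref{lem : poset}.
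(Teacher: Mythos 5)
Your skeleton matches the paper's exactly: induction on $\ell_\rho(\lambda)$, the cover $\Lambda[\rho/2]_+^{<\lambda} = U_1^{<\lambda}\cup U_2^{<\lambda}$, Lemma~\ref{lem : poset} applied to $h,i$ to identify $\cpx{U_1^{<\lambda}}$ with $\F(\lambda;\Lambda)$, the shift homeomorphism $\cpx{U_{12}^{<\lambda}}\approx\F((\lambda-\rho)+\rho/2;\Lambda[\rho/2])$ combined with Proposition~\ref{prp : susp}, and finally Lemma~\ref{lem : wedge} plus reindexing. (Minor: in the base case you should also treat $\lambda=0$ separately, since $\F(0;\Lambda[\rho/2])=S^{-2}$ by definition and is not $\cpx{U_1^{<0}}$.)

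The genuine gap is the step you yourself flag as delicate: showing the inclusion $\cpx{U_{12}^{<\lambda}}\hookrightarrow\cpx{U_1^{<\lambda}}$ is nullhomotopic. Your plan does not work as sketched, for two reasons. First, it nowhere uses the standing hypothesis that $\rho$ is reducible, yet reducibility is essential: for $\Lambda=\N$, $\rho=1$, $r=2$ one computes $\F(2;\Lambda[\rho/2])\simeq\pt$ while the right-hand side of~\eqref{eqn : main} is $\pt\vee S^1\vee S^2$, so the proposition is simply false for irreducible $\rho$, and any proof must invoke a factorization of $\rho$. Second, the gluing of nullhomotopies on the cover $V_+,V_-$ cannot be ``routed through $\rho$'' the way you propose: $\rho\notin V_+$, and $V_+$ is not contained in the upper set $(U_1^{<\lambda})_{\ge\rho}$ (an element $\mu+\rho/2$ with $0<_\Lambda\mu<_\Lambda\rho$ lies in $V_+$ but is not $\ge\rho$), so the poset-map inequality $\id\ge c_\rho$ is unavailable for that piece. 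The paper sidesteps all this by choosing $\sigma\in\Lambda$ with $0<_\Lambda\sigma<_\Lambda\rho$ (this is where reducibility enters) and writing an explicit chain $g_1\ge g_2\le g_3\ge g_4$ of poset maps $U_{12}^{<\lambda}\to U_1^{<\lambda}$, with $g_1$ the inclusion, $g_2$ and $g_3$ the maps that drop the $\rho/2$ increment and optionally add $\sigma$, and $g_4$ the constant map at $\sigma$; the inequalities $\sigma\le_\Lambda\mu$ for $\mu\in[\rho,\lambda)_\Lambda$ and $\mu+\sigma\in(0,\lambda)_\Lambda$ for $\mu\in(0,\lambda-\rho]_\Lambda$ are exactly what $0<_\Lambda\sigma<_\Lambda\rho$ buys. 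You should replace your covering-and-gluing plan with a direct chain of this kind.
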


\begin{proof}
  The proof is by induction on $\ell_\rho(\lambda)$.
  Assume that $\ell_\rho(\lambda) = 0$, that is, $\rho \not\le_\Lambda \lambda$.
  If $\lambda = 0$, the both sides of \eqref{eqn : main} are $S^{-2}$.
  Let $\lambda \in \Lambda_+$.
  Then $i : \Lambda \to \Lambda[\rho / 2]$ induces
  a poset isomorphism $(0, \lambda)_\Lambda \cong (0, \lambda)_{\Lambda[\rho / 2]}$.
  Hence
  \[ \F(\lambda; \Lambda[\rho / 2]) = \cpx{(0, \lambda)_{\Lambda[\rho / 2]}}
  \approx \cpx{(0, \lambda)_\Lambda} = \F(\lambda; \Lambda). \]

  Assume that $\ell_\rho(\lambda) \ge 1$, that is, $\rho \le_\Lambda \lambda$.
  We have
  \[ \F(\lambda; \Lambda[\rho / 2]) = \cpx{\Lambda[\rho / 2]_+^{< \lambda}}
  =\cpx{U_1^{< \lambda}} \cup \cpx{U_2^{< \lambda}}. \]
  Applying Lemma~\ref{lem : poset} to $h : U_1 \to \Lambda_+$ and $i : \Lambda_+ \to U_1$, we obtain
  \[ \cpx{U_1^{< \lambda}} \simeq \cpx{\Lambda_+^{< h(\lambda)}} = \F(\lambda; \Lambda) \]
  since $i h(\lambda) = \lambda$.
  On the other hand, we have
  \[ \cpx{U_2^{< \lambda}} = \cpx{[\rho / 2, \lambda)_{\Lambda[\rho / 2]}} \simeq \pt. \]
  We also have
  \begin{align*}
    \cpx{U_1^{< \lambda}} \cap \cpx{U_2^{< \lambda}}
    &=\cpx{U_{12}^{< \lambda}}
    \\&= \cpx{(\rho / 2, \lambda)_{\Lambda[\rho / 2]}}
    \\&\approx \cpx{(0, \lambda - \rho / 2)_{\Lambda[\rho / 2]}}
    \\&= \F((\lambda - \rho) + \rho / 2; \Lambda[\rho / 2])
    \\&\simeq \susp \F(\lambda - \rho; \Lambda[\rho / 2]).
  \end{align*}
  Since $\ell_\rho(\lambda - \rho) = \ell_\rho(\lambda) - 1$,
  we can apply the inductive hypothesis to $\lambda - \rho$,
  and conclude that
  \begin{align*}
    \susp \F(\lambda - \rho; \Lambda[\rho / 2])
    &\simeq \susp \bigvee_{\ell = 0}^{\ell_\rho(\lambda - \rho)} \susp^{2 \ell} \F((\lambda - \rho) - \ell \rho; \Lambda)
    \\&\simeq \bigvee_{\ell = 1}^{\ell_\rho(\lambda)} \susp^{2 \ell - 1} \F(\lambda - \ell \rho; \Lambda).
  \end{align*}

  Since $\rho$ is reducible, we can take $\sigma \in \Lambda$ such that $0 <_\Lambda \sigma <_\Lambda \rho$.
  We now show that the inclusion
  $\cpx{U_1^{< \lambda}} \cap \cpx{U_2^{< \lambda}} = \cpx{U_{12}^{< \lambda}} \hookrightarrow \cpx{U_1^{< \lambda}}$
  is homotopic to the constant map to $\sigma$.
  Note that
  \begin{align*}
    U_{12}^{< \lambda}
    &= (\rho / 2, \lambda)_{\Lambda[\rho / 2]}
    \\&= \{\, \mu \mid \mu \in [\rho, \lambda)_\Lambda \,\}
      \sqcup \{\, \mu + \rho / 2 \mid \mu \in (0, \lambda - \rho]_\Lambda \,\}, \quad \text{and} \\
    U_1^{< \lambda}
    &= \{\, \mu \mid \mu \in (0, \lambda)_\Lambda \,\}
      \sqcup \{\, \mu + \rho / 2 \mid \mu \in (0, \lambda - \rho]_\Lambda \,\}.
  \end{align*}
  Let $g_1, g_2, g_3, g_4 : U_{12}^{< \lambda} \to U_1^{< \lambda}$ be the maps defined by
  \begin{align*}
    g_1(\mu) &= \mu \\
    g_2(\mu) &= \mu \\
    g_3(\mu) &= \mu \\
    g_4(\mu) &= \sigma && (\mu \in [\rho, \lambda)_\Lambda) \quad \text{and} \\
    g_1(\mu + \rho / 2) &= \mu + \rho / 2 \\
    g_2(\mu + \rho / 2) &= \mu \\
    g_3(\mu + \rho / 2) &= \mu + \sigma \\
    g_4(\mu + \rho / 2) &= \sigma && (\mu \in (0, \lambda - \rho]_\Lambda).
  \end{align*}
  Then $g_1, g_2, g_3$ and $g_4$ are poset maps and satisfy
  \[ g_1 \ge g_2 \le g_3 \ge g_4. \]
  Hence
  \[ \text{(inclusion)} = \cpx{g_1} \simeq \cpx{g_2} \simeq \cpx{g_3} \simeq \cpx{g_4} = \text{(constant)}. \]
  Applying Lemma~\ref{lem : wedge}, we obtain
  \begin{align*}
    \F(\lambda; \Lambda[\rho / 2])
    &= \cpx{U_1^{< \lambda}} \cup \cpx{U_2^{< \lambda}}
    \\&\simeq \cpx{U_1^{< \lambda}} \vee \susp (\cpx{U_1^{< \lambda}} \cap \cpx{U_2^{< \lambda}})
    \\&\simeq \F(\lambda; \Lambda) \vee
    \susp \bigvee_{\ell = 1}^{\ell_\rho(\lambda)} \susp^{2 \ell - 1} \F(\lambda - \ell \rho; \Lambda)
    \\&\simeq \bigvee_{\ell = 0}^{\ell_\rho(\lambda)} \susp^{2 \ell} \F(\lambda - \ell \rho; \Lambda). \qedhere
  \end{align*}
\end{proof}

\section{Applications} \label{sec : app}

Let $\Lambda$ be an affine monoid, $\rho \in \Lambda$ and $r \in \N$.
Assume that $\rho$ is reducible and that $r \ge 2$.

\begin{thm} \label{thm : Poincare}
  The multi-graded Poincar\'e series of $\Lambda[\rho / r]$ satisfies
  \[ P_{\Lambda[\rho / r]}(t, \z)  = \frac{P_\Lambda(t, \z) \cdot (1 + t \z^{\rho / r})}{1 - t^2 \z^\rho}. \]
  In particular, if $r =2$, then
  \[ P_{\Lambda[\rho / 2]}(t, \z) = \frac{P_\Lambda(t, \z)}{1 - t \z^{\rho / 2}}. \]
\end{thm}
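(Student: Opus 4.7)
The plan is to convert the homotopy-type statement of Theorem~\ref{thm : main} into an identity of multigraded Betti numbers via the Laudal-Sletsj{\o}e formula~\eqref{eqn : LS}, and then recognize the resulting generating function. Recall the bijection $\Lambda \times \N^{<r} \cong \Lambda[\rho/r]$ sending $(\lambda, k)$ to $\lambda + k\rho/r$, so every element of $\Lambda[\rho/r]$ has a unique such presentation. Using the suspension identity $\betti_i(\susp X) = \betti_{i-1}(X)$ together with additivity of reduced Betti numbers on wedge sums, Theorem~\ref{thm : main} combined with~\eqref{eqn : LS} produces
\[ \beta_i(\lambda + k\rho/r; \Lambda[\rho/r]) = \sum_{\ell = 0}^{\ell_\rho(\lambda)} \beta_{i - 2\ell - k}(\lambda - \ell\rho; \Lambda) \]
for $k \in \{0, 1\}$, and $\beta_i(\lambda + k\rho/r; \Lambda[\rho/r]) = 0$ when $k \ge 2$, since the Frobenius complex is contractible in that range.

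Next, I would substitute this into the definition of $P_{\Lambda[\rho/r]}(t, \z)$, index the outer sum by triples $(i, \lambda, k)$ using the bijection above, and discard the vanishing contributions from $k \ge 2$. Reindexing via $\mu = \lambda - \ell\rho$ and $j = i - 2\ell - k$ identifies the pairs $(\lambda, \ell)$ satisfying $0 \le \ell \le \ell_\rho(\lambda)$ with arbitrary pairs $(\mu, \ell) \in \Lambda \times \N$; under this change of variable $t^i \z^{\lambda + k\rho/r} = t^j \z^\mu \cdot (t^2 \z^\rho)^\ell \cdot t^k \z^{k\rho/r}$. The sum therefore decouples into a product of $P_\Lambda(t, \z) = \sum_{j, \mu} \beta_j(\mu; \Lambda) t^j \z^\mu$, the geometric series $\sum_{\ell \ge 0} (t^2 \z^\rho)^\ell = 1/(1 - t^2 \z^\rho)$, and the factor $1 + t \z^{\rho/r}$ coming from the two surviving values of $k$, which yields the claimed formula. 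For $r = 2$ the factorization $1 - t^2 \z^\rho = (1 - t \z^{\rho/2})(1 + t \z^{\rho/2})$ cancels the numerator and gives the simplified expression.

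The main obstacles are purely organizational: verifying that the reindexing $(\lambda, \ell) \leftrightarrow (\mu, \ell)$ really is bijective onto $\Lambda \times \N$ (which follows because $\ell \le \ell_\rho(\lambda)$ is equivalent to $\lambda - \ell\rho \in \Lambda$), checking that the base case $\mu = 0$ is compatible with the convention $\betti_i(S^{-2}) = \delta_{i, -2}$ so that the term $\beta_0(0; \Lambda) = 1$ correctly appears as the constant term of $P_\Lambda(t, \z)$, and confirming that the combination of $k = 0$ and $k = 1$ contributions produces exactly $1 + t \z^{\rho/r}$ in the numerator rather than a more complicated polynomial.
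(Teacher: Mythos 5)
Your proposal is correct and follows essentially the same route as the paper's proof: convert Theorem~\ref{thm : main} into the Betti-number identity via~\eqref{eqn : LS}, substitute into the definition of $P_{\Lambda[\rho/r]}(t,\z)$, reindex with $\mu = \lambda - \ell\rho$ and $j = i - 2\ell - k$ using the bijectivity you note, and factor the resulting sum into $P_\Lambda(t,\z) \cdot \sum_{\ell\ge 0}(t^2\z^\rho)^\ell \cdot (1 + t\z^{\rho/r})$. The obstacles you flag are exactly the bookkeeping points the paper's computation handles, and your treatment of the $r=2$ cancellation matches the paper.
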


\begin{proof}
  Combining the equation~\eqref{eqn : LS} and Theorem~\ref{thm : main}, we obtain
  \[ \beta_i(\lambda + k \rho / r; \Lambda[\rho / r]) =
  \begin{cases}
    \sum\limits_{\ell = 0}^{\ell_\rho(\lambda)} \beta_{i - 2 \ell - k}(\lambda - \ell \rho; \Lambda)
    & \text{if $k \le 1$,} \\
    0 & \text{if $k \ge 2$}
  \end{cases} \]
  for $\lambda \in \Lambda$ and $k \in \N^{< r}$,
  where we let $\beta_i(\lambda; \Lambda) = 0$ for $i < 0$.
  Thus we have
  \begin{align*}
    P_{\Lambda[\rho / r]}(t, \z)
    &= \sum_{i \in \N} \sum_{\lambda \in \Lambda} \sum_{k = 0}^{r - 1}
    \beta_i(\lambda + k \rho / r; \Lambda[\rho / r]) t^i \z^{\lambda + k \rho / r}
    \\&= \sum_{i \in \N} \sum_{\lambda \in \Lambda} \sum_{k = 0}^1 \sum_{\ell = 0}^{\ell_\rho(\lambda)}
    \beta_{i - 2 \ell - k}(\lambda - \ell \rho; \Lambda) t^i \z^{\lambda + k \rho / r}
    \\&= \sum_{\ell = 0}^\infty \sum_{i \in \N} \sum_{\substack{\lambda \in \Lambda \\ \lambda \ge \ell \rho}}
    \sum_{k = 0}^1 \beta_{i - 2 \ell - k}(\lambda - \ell \rho; \Lambda) t^i \z^{\lambda + k \rho / r}
    \\&= \sum_{\ell = 0}^\infty \sum_{j \in \N} \sum_{\mu \in \Lambda} \sum_{k = 0}^1
    \beta_j(\mu; \Lambda) t^{j + 2 \ell + k} \z^{\mu + \ell \rho + k \rho / r}
    \\&= \sum_{j \in \N} \sum_{\mu \in \Lambda} \beta_j(\mu; \Lambda) t^j \z^\mu
    \cdot \sum_{\ell = 0}^\infty (t^2 \z^\rho)^\ell \cdot \sum_{k = 0}^1 (t \z^{\rho / r})^k
    \\&= \frac{P_\Lambda(t, \z) \cdot (1 + t \z^{\rho / r})}{1 - t^2 \z^\rho}. \qedhere
  \end{align*}
\end{proof}

\begin{cor}
  If the multi-graded Poincar\'e series $P_\Lambda(t, \z)$ of $\Lambda$ is given by a rational function,
  then so is $P_{\Lambda[\rho / r]}(t, \z)$.
\end{cor}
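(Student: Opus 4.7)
The plan is to observe that this corollary is an immediate consequence of Theorem~\ref{thm : Poincare}, so the argument is essentially formal bookkeeping. Recall that the multi-graded Poincaré series $P_\Lambda(t, \z)$ is a formal power series in $t$ and the monomials $\z^\mu$ for $\mu \in \Lambda$, and ``rational'' means that $P_\Lambda(t, \z)$ equals the expansion of a quotient $f / g$ where $f, g$ are polynomials in $t$ and finitely many of the $\z^\mu$. Under the inclusion $\Lambda \hookrightarrow \Lambda[\rho / r]$ the monomial $\z^\rho$ is retained, and the new monomial $\z^{\rho / r}$ corresponds to the new generator $\rho / r$ of $\Lambda[\rho / r]$.

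The only step is to apply the formula
\[ P_{\Lambda[\rho / r]}(t, \z) = \frac{P_\Lambda(t, \z) \cdot (1 + t \z^{\rho / r})}{1 - t^2 \z^\rho} \]
from Theorem~\ref{thm : Poincare}. If $P_\Lambda(t, \z) = f / g$ for polynomials $f, g$, then the numerator of the right-hand side is $f \cdot (1 + t \z^{\rho/r})$ divided by $g$, and the full expression becomes
\[ \frac{f \cdot (1 + t \z^{\rho / r})}{g \cdot (1 - t^2 \z^\rho)}, \]
which is a ratio of polynomials in $t$ and finitely many $\z^\mu$ for $\mu \in \Lambda[\rho / r]$. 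Hence $P_{\Lambda[\rho / r]}(t, \z)$ is rational.

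I do not expect any genuine obstacle: the only point that warrants a line of comment is that $1 - t^2 \z^\rho$ is invertible as a formal power series (its constant term is $1$), so dividing by it is a legitimate operation within the formal power series ring and preserves the property of being the expansion of a rational function.
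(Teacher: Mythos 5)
Your proposal is correct and matches the paper's intent exactly: the paper states this corollary without a separate proof, treating it as an immediate consequence of Theorem~\ref{thm : Poincare}, which is precisely the formal bookkeeping you carry out.
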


We say that \emph{$X$ has the homotopy type of a wedge of spheres}
if $X$ is $S^{-2}$ or a topological space which is either
\begin{itemize}
  \item empty,
  \item contractible,
  \item homotopy equivalent to a sphere $S^n$ for some $n \ge 0$, or
  \item homotopy equivalent to a wedge $\bigvee\limits_{i = 1}^m S^{n_i}$ of spheres for some $n_1, \dots, n_m \ge 0$.
\end{itemize}

Note that for such an $X$ the reduced Betti numbers are independent of the choice of a field $K$,
and the homotopy type is determined only by the reduced Betti numbers.

\begin{cor} \label{cor : wedge}
  Assume that there is a generating system $\{ \alpha_1, \dots, \alpha_d \}$ of $\Lambda$
  which satisfies $\alpha_i \le_\Lambda \rho$ for each $i$.
  If $\F(\lambda; \Lambda)$ has the homotopy type of a wedge of spheres for each $\lambda \in \Lambda$,
  then so has $\F(\gamma; \Lambda[\rho / r])$ for each $\gamma \in \Lambda[\rho / r]$.
\end{cor}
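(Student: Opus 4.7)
The plan is to apply Theorem~\ref{thm : main} and verify that the class of spaces having the homotopy type of a wedge of spheres is closed under the operations appearing there, namely iterated suspension and finite wedge. Every element of $\Lambda[\rho / r]$ is uniquely of the form $\lambda + k \rho / r$ with $\lambda \in \Lambda$ and $k \in \N^{< r}$, so let $\gamma = \lambda + k \rho / r$ be arbitrary. If $k \ge 2$, Theorem~\ref{thm : main} gives $\F(\gamma; \Lambda[\rho / r]) \simeq \pt$, which is contractible and hence of the required homotopy type by definition. If $k \le 1$, the same theorem yields the decomposition
\[ \F(\gamma; \Lambda[\rho / r]) \simeq \bigvee_{\ell = 0}^{\ell_\rho(\lambda)} \susp^{2 \ell + k} \F(\lambda - \ell \rho; \Lambda), \]
a finite wedge by Proposition~\ref{prp : Archi}, with each $\F(\lambda - \ell \rho; \Lambda)$ already of the required type by hypothesis.

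The routine next step is to observe that for $n \ge 0$, $\susp^n$ sends every allowed building block ($S^{-2}$, the empty space, a contractible space, a sphere $S^m$, or a finite wedge of such spheres) to another allowed building block: this follows directly from the conventions of Subsection~\ref{subsec : topology} together with the standard equivalence $\susp \bigl( \bigvee_j S^{m_j} \bigr) \simeq \bigvee_j S^{m_j + 1}$. Thus each summand $\susp^{2 \ell + k} \F(\lambda - \ell \rho; \Lambda)$ is a wedge of spheres, and a finite wedge of wedges of honest spheres is again a wedge of honest spheres.

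The only point requiring a brief case analysis, and the main technical obstacle, is the possible appearance of the formal symbol $S^{-2}$ inside the wedge. A summand $\F(\lambda - \ell \rho; \Lambda)$ equals $S^{-2}$ only when $\lambda - \ell \rho = 0$, which forces $\ell = \ell_\rho(\lambda)$ and $\lambda = \ell_\rho(\lambda) \rho$, so at most one summand is of this type. Whenever $2 \ell + k \ge 2$ the suspension $\susp^{2 \ell + k} S^{-2}$ is the honest sphere $S^{2 \ell + k - 2}$ and causes no trouble; the only remaining case is $\ell_\rho(\lambda) = 0$ together with $\lambda = 0$, in which the entire wedge collapses to a single formal term, $S^{-2}$ when $k = 0$ or $S^{-1}$ when $k = 1$, both of which are declared of the required type by the definition. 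This exhausts all cases.
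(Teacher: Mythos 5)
Your proposal never invokes the corollary's hypothesis that $\Lambda$ is generated by elements $\alpha_i \le_\Lambda \rho$, and this is not an omission you can afford: the paper uses that hypothesis first, to show that $\F(\lambda; \Lambda)$ is $0$-connected for every $\lambda \in \Lambda_{> \rho}$, and this connectivity is what makes the wedge in Theorem~\ref{thm : main} behave. The step you call ``routine'' --- that a finite wedge of wedges of spheres is again a wedge of spheres --- is only automatic when every factor is wedged at its own wedge point. The wedge produced by Theorem~\ref{thm : main} (via Lemma~\ref{lem : wedge} inside Proposition~\ref{prp : wedge}) is \emph{not} guaranteed to be of that form: $X_1 \simeq \F(\lambda; \Lambda)$ enters pointed at the specific vertex $\sigma$, not at an abstract wedge point. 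If $\F(\lambda; \Lambda)$ were a disconnected wedge of spheres and $\sigma$ happened to lie in a contractible component, the wedge would not be a wedge of spheres; for instance, if $A \simeq S^0 \vee S^1$ is pointed on the isolated $S^0$-leg and one wedges with $S^3$, the result is $S^1 \sqcup S^3$, which is not homotopy equivalent to any wedge of spheres even though its reduced Betti numbers coincide with those of $S^0 \vee S^1 \vee S^3$. The generating hypothesis rules this out by forcing $\F(\lambda; \Lambda)$ to be $0$-connected whenever $\lambda >_\Lambda \rho$, so the only unsuspended factor (the $\ell = 0$, $k = 0$ summand) has a single component. The other factors carry at least one suspension and are handled because a suspension of a non-empty CW complex is $0$-connected.

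Your case analysis of $S^{-2}$ is fine but incomplete: you never discuss the empty space $S^{-1}$ (which occurs when some $\lambda - \ell \rho$ is an atom), nor the disconnected outputs such as $S^0$ arising from $\susp S^{-1}$ or $\susp^2 S^{-2}$. The paper's proof addresses all of these by splitting into the cases $\ell_\rho(\lambda) = 0$, ``$\lambda \in \Lambda_{> \rho}$ or $k = 1$'', and ``$\lambda = \rho$ and $k = 0$'', and verifies in each that every wedge summand is either a nonempty, $0$-connected wedge of spheres or is collapsed to a single term. To repair your argument you would need to reinstate the missing connectivity step: prove, using the hypothesis $\alpha_i \le_\Lambda \rho$, that $\F(\lambda; \Lambda)$ is $0$-connected for $\lambda \in \Lambda_{> \rho}$ (cover $(0, \lambda)_\Lambda$ by the contractible cones $[\alpha_i, \lambda)_\Lambda$, all of which contain $\rho$), and then justify the wedge-of-wedges step only after confirming that each factor, with the exception of possibly one, is $0$-connected.
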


\begin{proof}
  We first show that $\F(\lambda; \Lambda)$ is $0$-connected for each $\lambda \in \Lambda_{> \rho}$.
  We can assume that each $\alpha_i$ is non-zero.
  For any element $\mu = \sum_i m_i \alpha_i$ of $\Lambda_+$,
  since $(m_1, \dots, m_d)$ is non-zero, $\alpha_i \le \mu$ holds for some $i$.
  Hence
  \[ \F(\lambda; \Lambda) = \cpx{(0, \lambda)_\Lambda} = \bigcup_{i = 1}^d \cpx{[\alpha_i, \lambda)_\Lambda}. \]
  Each $\cpx{[\alpha_i, \lambda)_\Lambda}$ is contractible and contains $\rho$
  since $\alpha_i \le_\Lambda \rho <_\Lambda \lambda$.
  Thus $\F(\lambda; \Lambda)$ is $0$-connected.

  Consider $X = \F(\lambda + k \rho / r; \Lambda[\rho / r])$ for $\lambda \in \Lambda$ and $k \in \N^{< r}$.
  If $k \ge 2$, then $X$ is contractible.
  Let $k \le 1$.
  If $\ell_\rho(\lambda) = 0$, then we have
  \[ X \simeq \susp^k \F(\lambda; \Lambda). \]
  Since $\F(\lambda; \Lambda)$ has the homotopy type of a wedge of spheres, so has $X$.
  Let $\ell_\rho(\lambda) \ge 1$, that is, $\rho \le_\Lambda \lambda$.
  Then we have
  \[ X \simeq \bigvee_{\ell = 0}^{\ell_\rho(\lambda)} \susp^{2 \ell + k} \F(\lambda - \ell \rho; \Lambda). \]
  If either $\lambda \in \Lambda_{> \rho}$ or $k = 1$ holds,
  then each $\susp^{2 \ell + k} \F(\lambda - \ell \rho; \Lambda)$ is a $0$-connected topological space
  which has the homotopy type of a wedge of spheres.
  Thus so has $X$.
  If both $\lambda = \rho$ and $k = 0$ hold, then we have
  \[ X \simeq \F(\rho; \Lambda) \vee \susp^2 \F(0; \Lambda) = \F(\rho; \Lambda) \vee S^0. \]
  Since $\F(\rho; \Lambda)$ is a non-empty topological space which has the homotopy type of a wedge of spheres,
  so has $X$.
\end{proof}

\section{Examples} \label{sec : exa}

By $\sub{a_1, \dots, a_g}$ we denote the submonoid of $\N$ generated by $a_1, \dots, a_g \in \N$.
We first show some convenient propositions.

\begin{prp} \label{prp : isom}
  Let $\Lambda$ be a finitely generated submonoid $\sub{a_1, \dots, a_g}$ of $\N$, $\rho$ a reducible element of $\Lambda$,
  and $r \in \N_{\ge 2}$.
  Assume that $\Lambda \subset \sub r$ and let $\rho = r b$.
  If $r$ and $b$ are relatively prime,
  then there exists a monoid isomorphism $\Lambda[\rho / r] \cong \Lambda + \sub b = \sub{a_1, \dots, a_g, b}$
  which sends $\lambda \in \Lambda$ to $\lambda$ and $\rho / r$ to $b$.
\end{prp}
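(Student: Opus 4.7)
The plan is to define the candidate isomorphism $\phi : \Lambda[\rho / r] \to \Lambda + \sub b$ by
\[ \phi(\lambda + k \rho / r) = \lambda + k b \qquad (\lambda \in \Lambda,\ k \in \N^{< r}), \]
using the normal form $\Lambda \times \N^{< r} \cong \Lambda[\rho / r]$ established just after the construction of $\Lambda[\rho / r]$, so that $\phi$ is a well-defined function by fiat. The task then splits into verifying that $\phi$ is a monoid homomorphism, that it is surjective, and that it is injective.

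For additivity, the only non-obvious case is $k + k' \ge r$: in $\Lambda[\rho / r]$ the sum $(\lambda + k \rho / r) + (\lambda' + k' \rho / r)$ rewrites in normal form as $(\lambda + \lambda' + \rho) + (k + k' - r) \rho / r$, whose image under $\phi$ is $\lambda + \lambda' + \rho + (k + k' - r) b = \lambda + \lambda' + (k + k') b$ because $\rho = r b$, matching $\phi(\lambda + k \rho / r) + \phi(\lambda' + k' \rho / r)$. Surjectivity is immediate: any element $\mu$ of $\sub{a_1, \dots, a_g, b} = \Lambda + \sub b$ can be written as $\lambda + m b$ with $\lambda \in \Lambda$ and $m \in \N$, and writing $m = q r + k$ with $0 \le k < r$ yields $\mu = (\lambda + q \rho) + k b = \phi\bigl((\lambda + q \rho) + k \rho / r\bigr)$.

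The main obstacle is injectivity, and this is precisely where the hypotheses $\Lambda \subset \sub r$ and $\gcd(r, b) = 1$ enter. Suppose $\phi(\lambda + k \rho / r) = \phi(\lambda' + k' \rho / r)$, i.e.\ $\lambda + k b = \lambda' + k' b$ as integers in $\N$, and assume without loss of generality that $k \le k'$; then $\lambda - \lambda' = (k' - k) b$. Since $\lambda, \lambda' \in \Lambda \subset \sub r$, the left-hand side is divisible by $r$, so $r \mid (k' - k) b$; as $\gcd(r, b) = 1$ this forces $r \mid (k' - k)$, and combined with $0 \le k' - k < r$ we conclude $k = k'$, hence $\lambda = \lambda'$. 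Thus $\phi$ is a bijective monoid homomorphism, and its action on $\lambda \in \Lambda$ (the case $k = 0$) and on $\rho / r$ (the case $\lambda = 0$, $k = 1$) can be read off directly from the defining formula.
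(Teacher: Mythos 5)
Your proof is correct. The core injectivity step is the same as the paper's — from $\lambda + kb = \lambda' + k'b$ one deduces $\lambda - \lambda' = (k'-k)b$, uses $\Lambda \subset \sub r$ to get $r \mid (k'-k)b$, and then $\gcd(r,b)=1$ to kill the factor $b$ — but the packaging of the map is different, and this changes which verifications are needed. The paper defines a monoid homomorphism $\widetilde F : \Lambda \oplus \N\alpha \to \N$ on the free level and lets it descend to $F : \Lambda[\rho/r] \to \Lambda + \sub b$; this makes both the homomorphism property and surjectivity automatic, and the work collapses to showing $\widetilde F(x) = \widetilde F(y) \Rightarrow x \sim y$ for arbitrary $k, k' \in \N$ (producing the $\ell \in \N$ with $k' - k = \ell r$ and exhibiting the chain of $\sim$-rewrites). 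You instead define $\phi$ directly on the normal-form representatives $\Lambda \times \N^{<r}$, which buys you an easier injectivity statement (once $r \mid (k'-k)$ and $0 \le k'-k < r$, you are done — no rewriting needed), at the cost of having to verify additivity by hand in the carry case $k + k' \ge r$ and to construct a preimage explicitly for surjectivity. Both routes are about the same length; the paper's is slightly slicker because it leans on the universal property of the quotient, while yours is more elementary and makes the reliance on the normal-form bijection explicit.
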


\begin{proof}
  Let $\widetilde F : \Lambda \oplus \N \alpha \to \N$ be the monoid homomorphism which sends
  $\lambda \in \Lambda$ to $\lambda$ and $\alpha$ to $b$.
  Then $\widetilde F$ induces a surjective monoid homomorphism
  \[ F : \Lambda[\rho / r] \to \Lambda + \sub b \]
  which sends $\lambda \in \Lambda$ to $\lambda$ and $\rho / r$ to $b$.
  It suffices to show that $F$ is injective, that is,
  $\widetilde F(x) = \widetilde F(y)$ implies $x \sim y$ for any $x, y \in \Lambda \oplus \N \alpha$.
  Let $\lambda, \lambda' \in \Lambda$ and $k, k' \in \N$,
  and assume that $\lambda + k b = \lambda' + k' b$.
  We can assume that $k \le k'$.
  Then we have $\lambda - \lambda' = (k' - k) b$.
  Since $\lambda - \lambda'$ is a multiple of $r$, there exists $\ell \in \N$ such that $k' - k = \ell r$.
  Hence $\lambda - \lambda' = \ell r b = \ell \rho$.
  Thus we have
  \[ \lambda + k \alpha = (\lambda' + \ell \rho) + k \alpha \sim \lambda' + (\ell r + k) \alpha = \lambda' + k' \alpha.
    \qedhere \]
\end{proof}

\begin{prp}
  Let $\Lambda$ be a finitely generated submonoid of $\N$ and $p$ a positive integer.
  Then the Frobenius complex $\F(p \lambda; p \Lambda)$ of $p \Lambda$ is homeomorphic to $\F(\lambda; \Lambda)$
  for each $\lambda \in \Lambda$,
  and the multi-graded Poincar\'e series of $p \Lambda$ satisfies
  \[ P_{p \Lambda}(t, \z) = P_\Lambda(t, \z^p). \]
\end{prp}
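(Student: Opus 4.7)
The plan is to reduce everything to the observation that multiplication by $p$ furnishes a monoid isomorphism $\Lambda \cong p\Lambda$, which then propagates through both the Frobenius complex and the Poincar\'e series by functoriality.

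First I would set up the map $\mu_p : \Lambda \to p\Lambda$ defined by $\mu_p(\lambda) = p\lambda$. It is a bijective monoid homomorphism: surjectivity is by definition of $p\Lambda$, and injectivity is because $p\lambda = p\lambda'$ in $\N$ forces $\lambda = \lambda'$ as $p \ge 1$. The main check is that $\mu_p$ is a poset isomorphism, i.e.\ $\lambda \le_\Lambda \lambda'$ iff $p\lambda \le_{p\Lambda} p\lambda'$. The forward direction is immediate; for the reverse, if $p\lambda' - p\lambda = p\nu$ for some $\nu \in \Lambda$, then cancellation in $\N$ gives $\lambda' - \lambda = \nu \in \Lambda$. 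Consequently $\mu_p$ restricts to a poset isomorphism $(0, \lambda)_\Lambda \cong (0, p\lambda)_{p\Lambda}$ for $\lambda \in \Lambda_+$, and the induced map on order complexes is a homeomorphism. For $\lambda = 0$, both Frobenius complexes are $S^{-2}$ by convention, so the claim $\F(p\lambda; p\Lambda) \approx \F(\lambda; \Lambda)$ holds in all cases.

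For the Poincar\'e series, I would invoke the Laudal--Sletsj\o e formula~\eqref{eqn : LS} together with the homeomorphism just established. Since homeomorphic spaces have equal reduced Betti numbers (and this also holds for the convention on $S^{-2}$), we obtain
\[ \beta_i(p\lambda; p\Lambda) = \betti_{i - 2}(\F(p\lambda; p\Lambda)) = \betti_{i - 2}(\F(\lambda; \Lambda)) = \beta_i(\lambda; \Lambda) \]
for every $\lambda \in \Lambda$ and $i \in \N$. Observing that every element of $p\Lambda$ is uniquely of the form $p\lambda$ with $\lambda \in \Lambda$, the desired identity follows from a change of summation variable:
\[ P_{p\Lambda}(t, \z) = \sum_{i \in \N} \sum_{\lambda \in \Lambda} \beta_i(p\lambda; p\Lambda)\, t^i \z^{p\lambda} = \sum_{i \in \N} \sum_{\lambda \in \Lambda} \beta_i(\lambda; \Lambda)\, t^i (\z^p)^\lambda = P_\Lambda(t, \z^p). \]

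There is no real obstacle here; the only point requiring a moment of care is verifying that $\mu_p$ reflects the order (so that the intervals correspond exactly), which uses the cancellativity of $\N$ together with the concrete description $\nu = (\lambda' - \lambda)$ of the witness element. Everything else is bookkeeping that follows mechanically from the isomorphism $\Lambda \cong p\Lambda$.
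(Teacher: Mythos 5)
Your proposal is correct and follows essentially the same route as the paper: the monoid isomorphism $\lambda \mapsto p\lambda$ yields a homeomorphism of Frobenius complexes, and the Laudal--Sletsj{\o}e formula~\eqref{eqn : LS} plus a change of variable in the sum gives the Poincar\'e series identity. You simply make explicit a detail the paper leaves implicit (that the map reflects the order), which is a reasonable thing to check.
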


\begin{proof}
  By the monoid isomorphism $\Lambda \cong p \Lambda$ which sends $\lambda$ to $p \lambda$,
  $\F(\lambda; \Lambda)$ is homeomorphic to $\F(p \lambda; p \Lambda)$ for each $\lambda \in \Lambda$.
  By the equation~\eqref{eqn : LS}, we have
  \[ \beta_i(\lambda; \Lambda) = \betti_{i - 2}(\F(\lambda; \Lambda))
    = \betti_{i - 2}(\F(p \lambda; p \Lambda)) = \beta_i(p \lambda; p \Lambda) \]
  for $i \in \N$ and $\lambda \in \Lambda$.
  Hence
  \begin{align*}
    P_{p \Lambda}(t, \z)
    &= \sum_{i \in \N} \sum_{\lambda \in \Lambda} \beta_i(p \lambda, p \Lambda) t^i \z^{p \lambda}
    \\&= \sum_{i \in \N} \sum_{\lambda \in \Lambda} \beta_i(\lambda, \Lambda) t^i (\z^p)^\lambda
    \\&= P_\Lambda(t, \z^p). \qedhere
  \end{align*}
\end{proof}

Let us calculate  the simplest case.

\begin{exa}
  The Frobenius complex $\F(n; \N)$ of $\N$ has the homotopy type of a wedge of spheres for each $n \in \N$,
  and the multi-graded Poincar\'e series of $\N$ satisfies
  \[ P_\N(t, \z) = 1 + t \z. \]
\end{exa}

\begin{proof}
  We have
  \begin{align*}
    \F(0, \N) &= S^{-2}, \\
    \F(1, \N) &= \cpx{(0, 1)_\N} = S^{-1}, \quad \text{and} \\
    \F(n, \N) &= \cpx{(0, n)_\N} = \cpx{[1, n)_\N} \simeq \pt \qquad (n \ge 2).
  \end{align*}
  Thus $\F(n; \N)$ has the homotopy type of a wedge of spheres for each $n \in \N$,
  and the multi-graded Poincar\'e series satisfies
  \begin{align*}
    P_\N(t, \z)
    &= \sum_{i \in \N} \sum_{n \in \N} \beta_i(n; \N) t^i \z^n
    \\&= \sum_{i \in \N} \sum_{n \in \N} \betti_{i - 2}(\F(n; \N)) t^i \z^n
    \\&= 1 + t \z. \qedhere
  \end{align*}
\end{proof}

We now calculate some examples using the result of the previous sections.

\begin{exa} \label{exa : two}
  Let $a$ and $b$ be integers with $2 \le a < b$,
  and assume that $b \notin \sub a$.
  Then the Frobenius complex $\F(\lambda; \sub{a, b})$ of $\sub{a, b}$ has the homotopy type of a wedge of spheres
  for each $\lambda \in \sub{a, b}$,
  and the multi-graded Poincar\'e series of $\sub{a, b}$ satisfies
  \[ P_{\sub{a, b}}(t, \z) = \frac{(1 + t \z^a) \cdot (1 + t \z^b)}{1 - t^2 \z^m}, \]
  where $m$ is the least common multiple of $a$ and $b$.
\end{exa}

\begin{proof}
  We first show the case where $a$ and $b$ are relatively prime.
  Let $\rho = a b \in \sub a$.
  By Proposition~\ref{prp : isom}, there exists a monoid isomorphism $\sub a [\rho / a] \cong \sub{a, b}$
  which sends $a$ to $a$ and $\rho / a$ to $b$.
  Note that $\sub a = a \N \cong \N$.
  Then we can apply Theorem~\ref{thm : Poincare} and Corollary~\ref{cor : wedge} to $\sub a [\rho / a] \cong \sub{a, b}$.
  Thus $\F(\lambda; \sub{a, b})$ has the homotopy type of a wedge of spheres for each $\lambda \in \sub{a, b}$,
  and the multi-graded Poincar\'e series satisfies
  \begin{align*}
    P_{\sub{a, b}}(t, \z)
    &= \frac{P_{\sub a}(t, \z) \cdot (1 + t \z^b)}{1 - t^2 \z^\rho}
    \\&= \frac{P_\N(t, \z^a) \cdot (1 + t \z^b)}{1 - t^2 \z^{a b}}
    \\&= \frac{(1 + t \z^a) \cdot (1 + t \z^b)}{1 - t^2 \z^{a b}}.
  \end{align*}

  We now turn to general cases.
  Let $d$ be the greatest common divisor of $a$ and $b$,
  and let $a = a' d$ and $b = b' d$.
  Then $a'$ and $b'$ are relatively prime,
  and satisfy $2 \le a' < b'$ by the assumptions $a < b$ and $b \notin \sub a$.
  Note that $\sub{a, b} = d \sub{a', b'}$.
  Thus $\F(\lambda; \sub{a, b})$ has the homotopy type of a wedge of spheres
  for each $\lambda \in \sub{a, b}$,
  and the multi-graded Poincar\'e series satisfies
  \[ P_{\sub{a, b}}(t, \z)
    = P_{\sub{a', b'}}(t, \z^d)
    = \frac{\bigl( 1 + t (\z^d)^{a'} \bigr) \cdot \bigl( 1 + t (\z^d)^{b'} \bigr)}{1 - t^2 (\z^d)^{a' b'}}
    = \frac{(1 + t \z^a) \cdot (1 + t \z^b)}{1 - t^2 \z^m}. \qedhere \]
\end{proof}

\begin{exa}
  Let $p, q$ and $r$ be relatively prime integers with $2 \le p < q < r$.
  Let $\Lambda$ be the submonoid $\sub{p q, p r, q r}$ of $\N$ generated by
  three elements $p q, p r$ and $q r$.
  Then the Frobenius complex $\F(\lambda; \Lambda)$ of $\Lambda$
  has the homotopy type of a wedge of spheres for each $\lambda \in \Lambda$,
  and the multi-graded Poincar\'e series of $\Lambda$ satisfies
  \[ P_\Lambda(t, \z)
    = \frac{(1 + t \z^{p q}) \cdot (1 + t \z^{p r}) \cdot (1 + t \z^{q r})}{(1 - t^2 \z^{p q r})^2}. \]
\end{exa}

\begin{proof}
  Let $\rho = p \cdot q r = r \cdot p q = q \cdot p r \in \sub{p q, p r}$.
  By Proposition~\ref{prp : isom}, there is a monoid isomorphism $\sub{p q, p r}[\rho / p] \cong \sub{p q, p r, q r} = \Lambda$
  which sends $\lambda \in \sub{p q, p r}$ to $\lambda$ and $\rho / p$ to $q r$.
  Then we can apply Theorem~\ref{thm : Poincare} and Corollary~\ref{cor : wedge} to $\sub{p q, p r}[\rho / p] \cong \Lambda$.
  Thus $\F(\lambda; \Lambda)$ has the homotopy type of a wedge of spheres for each $\lambda \in \Lambda$,
  and the multi-graded Poincar\'e series satisfies
  \[ P_\Lambda(t, \z)
    = \frac{P_{\sub{p q, p r}}(t, \z) \cdot (1 + t \z^{q r})}{1 - t^2 \z^\rho}
    = \frac{(1 + t \z^{p q}) \cdot (1 + t \z^{p r}) \cdot (1 + t \z^{q r})}{(1 - t^2 \z^{p q r})^2} \qedhere \]
\end{proof}

\begin{exa}
  Let $a$ be a positive even number and $d$ a positive odd number,
  and assume that $a + 2 d \notin \sub a$.
  Let $\Lambda$ be the submonoid $\sub{a, a + d, a + 2 d}$ of $\N$ generated by the arithmetic sequence $a, a + d, a + 2 d$.
  Then the Frobenius complex $\F(\lambda; \Lambda)$ of $\Lambda$ has the homotopy type of a wedge of spheres
  for each $\lambda \in \Lambda$,
  and the multi-graded Poincar\'e series of $\Lambda$ satisfies
  \[ P_\Lambda(t, \z) = \frac{(1 + t \z^a) \cdot (1 + t \z^{a + 2 d})}{(1 - t^2 \z^m) \cdot (1 - t \z^{a + d})}, \]
  where $m$ is the least common multiple of $a$ and $a + 2 d$.
\end{exa}

\begin{proof}
  Let $\rho = 2 (a + d) = a + (a + 2 d) \in \sub{a, a + 2 d}$.
  By Proposition~\ref{prp : isom}, there is
  a monoid isomorphism $\sub{a, a + 2 d}[\rho / 2] \cong \sub{a, a + d, a + 2 d} = \Lambda$
  which sends $\lambda \in \sub{a, a + 2 d}$ to $\lambda$ and $\rho / 2$ to $a + d$.
  Then we can apply Theorem~\ref{thm : Poincare} and Corollary~\ref{cor : wedge} to $\sub{a, a + 2 d}[\rho / 2] \cong \Lambda$.
  Thus $\F(\lambda; \Lambda)$ has the homotopy type of a wedge of spheres for each $\lambda \in \Lambda$,
  and the multi-graded Poincar\'e series satisfies
  \[ P_\Lambda(t, \z)
    = \frac{P_{\sub{a, a + 2 d}}(t, \z)}{1 - t \z^{a + d}}
    = \frac{(1 + t \z^a) \cdot (1 + t \z^{a + 2 d})}{(1 - t^2 \z^m) \cdot (1 - t \z^{a + d})}. \qedhere \]
\end{proof}

The following is an answer to a question raised by Clark and Ehrenborg (\cite{CE}, Question 6.4).

\begin{exa}
  Let $p$ and $q$ be relatively prime integers with $2 \le p < q$.
  For $n \ge 1$, let $\Lambda_n$ be the submonoid $\sub{p^n, p^{n - 1} q, \dots, p q^{n - 1}, q^n}$ of $\N$ generated by
  the geometric sequence $p^n, p^{n - 1} q, \dots, p q^{n - 1}, q^n$.
  Then the Frobenius complex $\F(\lambda; \Lambda_n)$ of $\Lambda_n$ has the homotopy type of a wedge of spheres
  for each $\lambda \in \Lambda_n$,
  and the multi-graded Poincar\'e series of $\Lambda_n$ satisfies
  \[ P_{\Lambda_n}(t, \z)
    = \frac{\prod\limits_{i = 0}^n (1 + t \z^{p^{n - i} q^i})}{\prod\limits_{i = 1}^n (1 - t^2 \z^{p^{n - i + 1} q^i})}. \]
  In particular, if $p = 2$, then
  \[ P_{\Lambda_n}(t, \z)
    = \frac{1 + t \z^{2^n}}{\prod\limits_{i = 1}^n (1 - t \z^{2^{n - i} q^i})}. \]
\end{exa}

\begin{proof}
  The proof is by induction on $n$.
  If $n = 1$, this follows from Example~\ref{exa : two}.
  Let $n \ge 2$.
  Note that $p \Lambda_{n - 1} = \sub{p^n, p^{n - 1} q, \dots, p q^{n - 1}}$
  and $\Lambda_n = p \Lambda_{n - 1} + \sub{q^n}$.
  Let $\rho = p \cdot q^n = q \cdot p q^{n - 1} \in p \Lambda_{n - 1}$.
  By Proposition~\ref{prp : isom}, there is a monoid isomorphism $p \Lambda_{n - 1}[\rho / p] \cong \Lambda_n$
  which sends $\lambda \in p \Lambda_{n - 1}$ to $\lambda$ and $\rho / p$ to $q^n$.
  We now show that $\rho \in p \Lambda_{n - 1}$ satisfies the assumption of Corollary~\ref{cor : wedge}.
  By the equation
  \[ q^n - p^n = (q - p) \sum_{i = 0}^{n - 1} p^{n - 1 - i} q^i \]
  we have
  \[ \rho = p \cdot q^n = p \cdot p^n + \sum_{i = 0}^{n - 1} (q - p) \cdot p^{n - i} q^i. \]
  Hence $p^{n - i} q^i \le_{p \Lambda_{n - 1}} \rho$ for each $i \in \{ 0, \dots, n - 1 \}$.
  Then we can apply Theorem~\ref{thm : Poincare} and Corollary~\ref{cor : wedge} to $p \Lambda_{n - 1}[\rho / p] \cong \Lambda_n$.
  Thus $\F(\lambda; \Lambda_n)$ has the homotopy type of a wedge of spheres for each $\lambda \in \Lambda_n$,
  and the multi-graded Poincar\'e series satisfies
  \begin{align*}
    P_{\Lambda_n}(t, \z)
    &= \frac{P_{p \Lambda_{n - 1}}(t, \z) \cdot (1 + t \z^{q^n})}{1 - t^2 \z^{p q^n}}
    \\&= \frac{P_{\Lambda_{n - 1}}(t, \z^p) \cdot (1 + t \z^{q^n})}{1 - t^2 \z^{p q^n}}
    \\&= \frac{\prod\limits_{i = 0}^{n - 1} \bigl( 1 + t (\z^p)^{p^{n - 1 - i} q^i} \bigr)}
    {\prod\limits_{i = 1}^{n - 1} \bigl( 1 + t^2 (\z^p)^{p^{n - i} q^i} \bigr)}
    \cdot \frac{1 + t \z^{q^n}}{1 - t^2 \z^{p q^n}}
    \\&= \frac{\prod\limits_{i = 0}^n (1 + t \z^{p^{n - i} q^i})}
    {\prod\limits_{i = 1}^n (1 + t^2 \z^{p^{n - i + 1} q^i})} \qedhere.
  \end{align*}
\end{proof}

\subsection*{Acknowledgement}

The author would like to show his greatest appreciation to his supervisor Prof.~Toshitake Kohno
for helpful suggestions and insightful comments.


\begin{thebibliography}{PRS98}
  \bibitem[CE12]{CE}
    E.~Clark, R.~Ehrenborg,
    \emph{The Frobenius complex},
    Ann.\ Comb.\ \textbf{16} (2012), no.~2, 215--232.
  \bibitem[LS85]{LS}
    O.~A.~Laudal, A.~Sletsj{\o}e,
    \emph{Betti numbers of monoid algebras. Applications to $2$-dimensional torus embeddings},
    Math.\ Scand.\ \textbf{56} (1985), no.~2, 145--162.
  \bibitem[PRS98]{PRS}
    I.~Peeva, V.~Reiner, B.~Sturmfels,
    \emph{How to shell a monoid},
    Math.\ Ann.\ \textbf{310} (1998), no.~2, 379--393.
\end{thebibliography}
\end{document}